\pdfoutput=1
\documentclass{amsart}
\usepackage[longtable]{multirow}
\usepackage{graphicx}
\usepackage{longtable}
\usepackage{amsmath}
\usepackage{amsthm}
\usepackage{amsfonts}
\usepackage{mathtools}
\usepackage{fullpage}
\usepackage{tikz}
\usepackage{amssymb}
\newtheorem{theorem}{Theorem}
\newtheorem{observation}{Observation}
\newtheorem{fact}{Fact}
\newtheorem{lemma}{Lemma}
\newtheorem{corollary}{Corollary}
\newtheorem{claim}{Claim}
\theoremstyle{definition}
\newtheorem*{definition}{Definition}
\newtheorem*{remark}{Remark}
\newtheorem{example}{Example}

\usepackage{rotating}
\usepackage{accents}
\usepackage[margin=1in]{geometry}
\usetikzlibrary{arrows}
\usetikzlibrary{arrows.meta}
\tikzset{edge/.style={->,> = latex'}}
\tikzset{
	DD/.style={double distance=5pt, shorten >=00pt,-{Classical TikZ Rightarrow[length=3mm]}},
	Rightarrow/.style={line width=1pt, double distance=4.5pt, -{Classical TikZ Rightarrow[length=3mm]}},
	triple/.style={preaction={draw,Rightarrow},-},
	quadruple/.style={preaction={draw,Rightarrow,shorten >=0pt},shorten >=3.5pt,-,double,double distance=1.5pt}
}

\title{Koszul modules of Kac-Moody Lie algebras}
\subjclass[2010]{Primary: 17B67; Secondary: 17B10}
\keywords{Kac-Moody algebras, Koszul modules, nilpotency}
\thanks{The author is supported by the grant MAESTRO NCN - UMO-2019/34/A/ST1/00263
	- Research in Commutative Algebra and Representation Theory.}
\author{Tymoteusz Chmiel}
\address{Jagiellonian University,
	ul. {\L}ojasiewicza 6,
	30-348 Krak\'ow,
	Poland}

\begin{document}
\maketitle

\begin{abstract}
We introduce Koszul modules associated with (graded) Kac-Moody Lie algebras. We provide a precise criterion for when these modules are of finite length. As an exemplary application we deduce a bound on the dimension of the second graded component for a certain class of graded Kac-Moody Lie algebras. We also provide an exact description of all nilpotent Kac-Moody Koszul modules.
\end{abstract}

\section*{Introduction}

Koszul modules are graded modules over a polynomial ring $S=\mathbb{C}[x_1\cdots,x_n]$ defined in terms of the Koszul complex resolving the trivial $S$-module $\mathbb{C}$. They were introduced in \cite{Papadima-Suciu}, where they were used to give a unified proof of the vanishing of the resonance of outer Torelli groups of surface groups. In \cite{AFPRW1, AFPRW2} a uniform bound on the length of \textit{nilpotent} Koszul modules was obtained. This result turned out to have many interesting applications to the study of invariants associated with finitely generated groups \cite{AFPRW1}. It also provided a new proof of the generic Green's conjecture \cite{AFPRW2}. Later in \cite{Raicu-Sam} a generalization to a bi-graded setting resulted in the proof of the Canonical Ribbon Conjecture. Numerous applications of the theory of Koszul modules in algebraic geometry can be found in \cite{AFRW}.

The goal of this paper is to present yet another setting in which Koszul modules appear. In general, a Koszul module is associated with a pair $(V,K)$, where $V$ is a finite-dimensional vector space and $K\subset\bigwedge^2 V$ is a subspace. If $\mathfrak{g}=\bigoplus_{i\geq 0}\mathfrak{g}_i$ is a graded Lie algebra, a natural Koszul module associated with $\mathfrak{g}$ is given by setting $V:=\mathfrak{g}_1$ and taking $K$ to be the kernel of the Lie bracket. We investigate a situation when $\mathfrak{g}$ is (the positive part) of a Kac-Moody Lie algebra, with the grading induced by a choice of a simple root. Such gradings were recently found to be important in construction of generic resolutions of length $3$ \cite{Weyman}.

When it comes to applications, particularly important are \emph{nilpotent} Koszul modules, i.e. those with only finitely many non-zero graded components. Our main result is Theorem \ref{th:1}, which provides a simple condition equivalent to the nilpotency of a given Kac-Moody Koszul module. The proof uses only elementary facts about Kac-Moody Lie algebras, together with a theorem of Papadima-Suciu characterizing equivariant Koszul modules of finite length. Furthermore, using the Kostant's formula, we are able to provide a precise intrinsic description of all nilpotent Kac-Moody Koszul modules (Theorem \ref{th:max}).

The structure of this paper is as follows. In section 1 we define Koszul modules and associated resonance varieties. Section 2 provides the definition of Kac-Moody Lie algebras together with their basic properties. Section 3 introduces Koszul modules associated with Kac-Moody Lie algebras. Main results are contained in the last two section: in section 4 we prove that the nilpotency of a Kac-Moody Koszul module is equivalent to a simple condition on the associated generalized Cartan matrix, while section 5 gives a precise description of all nilpotent Koszul modules which arise this way.

\section{Koszul modules and resonance varieties}

Let $V$ be a finite dimensional vector space over $\mathbb{C}$ and let $S:=\text{Sym}(V)\cong\mathbb{C}[x_1,\cdots,x_n]$ be the symmetric algebra on $V$. The minimal free resolution of a trivial $S$-module $\mathbb{C}$ is the \textit{Koszul complex}
$$\mathcal{K}:0\rightarrow S\otimes\bigwedge^nV\xrightarrow{\delta_n}\cdots\xrightarrow{\delta_{p+1}}S\otimes\bigwedge^pV\xrightarrow{\delta_p}\cdots\xrightarrow{\delta_2}S\otimes V\xrightarrow{\delta_1}S,$$
where the differentials on basis elements are given by the formula
$$\delta_p(v_1\wedge\cdots\wedge v_p)=\displaystyle\sum_{i=1}^{p}v_i\otimes (v_1\wedge\cdots\wedge\hat{v_i}\wedge\cdots\wedge v_p)$$

\begin{definition}
Let $\iota:K\hookrightarrow\bigwedge^2V$ be a linear subspace. The \textit{Koszul module} $\mathcal{W}(V,K)$ is the $S$-module with the presentation given by
$$S\otimes(\bigwedge^3V\oplus K)\xrightarrow{\delta_3\oplus(\text{id}\otimes\iota)}S\otimes\bigwedge^2V\twoheadrightarrow\mathcal{W}(V,K)$$
\end{definition}

\noindent If we put $\bigwedge^2V$ and $K$ in degree $0$ and $\bigwedge^3V$ in degree $1$, the Koszul module $\mathcal{W}(V,K)$ inherits this grading. Its graded pieces are given by
\begin{equation}\label{eq:gr}
\mathcal{W}(V,K)_q=\textnormal{cokernel}\left\{\textnormal{Sym}^{q-1}\otimes\bigwedge^3V\rightarrow\textnormal{Sym}^q\otimes\left(\bigwedge^2V/K\right)\right\}
\end{equation}

Several properties of Koszul modules follow directly from the fact that the Koszul complex is a minimal free resolution. For example, one can see that an alternative way to define $\mathcal{W}(V,K)$ is as the cokernel $\textnormal{im}(\delta_2)\big/\textnormal{im}(\delta_2\circ\iota)$. Furthermore, $\mathcal{W}(V,K)=0$ if and only if $K=\bigwedge^2V$. On the other extreme we have $\mathcal{W}(V,\{0\})_q=H^0\left(\mathbb{P}(V^*),\Omega^1(q+2)\right)$. Note also that if we have an inclusion $K'\subset K$, then there is a surjective map of Koszul modules $\mathcal{W}(V,K')\twoheadrightarrow\mathcal{W}(V,K)$. In fact it is just a simple example of a much stronger functoriality \cite{Papadima-Suciu}.

As with any graded module, a fundamental question about Koszul modules is whether they have \textit{finite length}, i.e. if the graded pieces eventually vanish: $\mathcal{W}(V,K)_q=0$ for $q\gg0$. If the graded pieces are finite-dimensional, it is of course equivalent to the fact that $\dim\mathcal{W}(V,K)<\infty$. A graded module over a polynomial ring has finite length if and only if it is \textit{nilpotent}, i.e. its support vanishes.

An important \textit{geometric} object for studying the nilpotency of a Koszul module $\mathcal{W}(V,K)$ is the associated \textit{resonance variety} $\mathcal{R}(V,K)$. Let $K^\perp:=\{a\in\bigwedge^2V^*:a\big|_K\equiv0\}$. Then the resonance variety is defined as
$$
\mathcal{R}(V,K)=\{a\in V^*:\exists b\in V^*\textnormal{ such that }  0\neq a\wedge b\in K^{\bot}\}\cup\{0\}
$$

These resonance varieties were introduced by Papadima and Suciu in \cite{Papadima-Suciu}, where they proved that away from the origin $\mathcal{R}(V,K)$ equals the support of the Koszul module $\mathcal{W}(V,K)$. Thus the module $\mathcal{W}(V,K)$ is nilpotent if and only if $\mathcal{R}(V,K)=\{0\}$ (see Theorem \ref{th:PS}). Using this description they showed that the set of subspaces $K$ with $\dim K=m$ and $\mathcal{R}(V,K)=\{0\}$ is Zariski-open inside the Grassmanian $\textnormal{Gr}_m(\bigwedge^2V)$ and that it is non-empty exactly for $m\geq 2n-3$. From this one can conclude that there exists a number $q:=q(n,m)$ such that if $\dim V=n$, $\dim K=m$ and $\mathcal{W}(V,K)$ is nilpotent, then $\mathcal{W}(V,K)_q=0$. This was made even more explicit in \cite{AFPRW1}, \cite{AFPRW2} where authors showed that one can take $q=n-3$. Thus one can always check whether the Koszul module $\mathcal{W}(V,K)$ is nilpotent by computing the cokernel (\ref{eq:gr}) for $q=n-3$.

We will consider a situation where $V$ is a representation of a semi-simple Lie algebra $\mathfrak{g}$ and $K\subset\bigwedge^2V$ is a $\mathfrak{g}$-submodule. Then the graded components $\mathcal{W}(V,K)_q$ are representations of $\mathfrak{g}$ as well, and the resonance variety $\mathcal{R}(V,K)$ is a $\mathfrak{g}$-invariant subset of $V^*$. If $\mathfrak{g}$ acts on $V$ with finitely many orbits, this allows one to compute the resonance variety explicitly as a sum of these orbits via simple linear algebra. Therefore in this case the verification of whether the Koszul module is nilpotent can be accomplished very easily. In general the following result of Papadima and Suciu provides a very useful criterion for determining whether an equivariant Koszul module is nilpotent. In this theorem we assume that both the representation $V$ and the Lie algebra $\mathfrak{g}$ are finite-dimensional.

\begin{theorem}\label{th:PS}
Let $V$ be an irreducible representation of a semi-simple Lie algebra $\mathfrak{g}$ with the highest weight $\lambda$ and let $K\subset\bigwedge^2 V$ be a subrepresentation. Let $V^*:=V(\lambda^*)$ be the dual representation and put $K^\perp:=\{a\in\bigwedge^2 V^*:a|_K\equiv 0\}$.

The following conditions are equivalent:
\begin{itemize}
\item $\mathcal{R}(V,K)=\{0\}$;
\item the Koszul module $\mathcal{W}(V,K)$ is nilpotent;
\item $2\lambda^*-\beta$ is not a dominant weight of $K^\perp$ for any simple root $\beta$.
\end{itemize}	
\end{theorem}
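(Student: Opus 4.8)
The plan is to treat the three bullet points in two stages. The equivalence of the first two is not really new: it is exactly the Papadima--Suciu identification recalled above, that away from the origin $\mathcal{R}(V,K)$ equals the support of $\mathcal{W}(V,K)$, so that nilpotency (support $=\{0\}$) is the same as $\mathcal{R}(V,K)=\{0\}$. Thus all the content lies in the equivalence of $\mathcal{R}(V,K)=\{0\}$ with the representation-theoretic condition. The first move I would make is to unwind the definition of the resonance variety: $\mathcal{R}(V,K)\neq\{0\}$ holds precisely when there exist linearly independent $a,b\in V^*$ with $a\wedge b\in K^\perp$, i.e. precisely when $K^\perp$ contains a nonzero \emph{decomposable} vector; equivalently, the projective subspace $\mathbb{P}(K^\perp)$ meets the Grassmannian $\mathrm{Gr}_2(V^*)$ in its Plücker embedding. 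So the goal becomes: $K^\perp$ contains a nonzero decomposable vector if and only if $2\lambda^*-\beta$ is a dominant weight of $K^\perp$ for some simple root $\beta$.

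For the direction ``$2\lambda^*-\beta$ occurs $\Rightarrow$ decomposable exists'', the key computation is that the weight space $\left(\bigwedge^2 V^*\right)_{2\lambda^*-\beta}$ is one-dimensional and spanned by the decomposable vector $v_{\lambda^*}\wedge v_{\lambda^*-\beta}$, where $v_{\lambda^*}$ is a highest weight vector of $V^*$ and $v_{\lambda^*-\beta}=f_\beta v_{\lambda^*}$ (nonzero exactly when $\langle\lambda^*,\beta^\vee\rangle\geq 1$). Indeed, $2\lambda^*-\beta$ is a \emph{maximal} weight of $\bigwedge^2 V^*$: writing its two constituent weights as $\lambda^*-p$ and $\lambda^*-q$ with $p+q=\beta$ forces $\{p,q\}=\{0,\beta\}$ since $\beta$ is simple, and both $\lambda^*$ and $\lambda^*-\beta$ have weight multiplicity one in $V^*$. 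Using $[e_\alpha,f_\beta]=\delta_{\alpha\beta}h_\beta$ one checks that $v_{\lambda^*}\wedge v_{\lambda^*-\beta}$ is annihilated by every $e_\alpha$, so it is a genuine highest weight vector. Hence if the dominant weight $2\lambda^*-\beta$ occurs in the subrepresentation $K^\perp$, multiplicity one forces $v_{\lambda^*}\wedge v_{\lambda^*-\beta}\in K^\perp$, giving a nonzero decomposable element and $\mathcal{R}(V,K)\neq\{0\}$.

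For the converse, I would start from an arbitrary nonzero decomposable $a\wedge b\in K^\perp$ and use invariance to upgrade it to the special form above. The cone $C$ of decomposable vectors is closed and $G$-stable (it is the affine cone over $\mathrm{Gr}_2(V^*)$), and $K^\perp$ is a $G$-submodule, so $C\cap K^\perp$ is a nonzero closed $G$-stable cone whose projectivization is a nonempty projective $G$-variety. By the Borel fixed point theorem it contains a $B$-fixed point, i.e. $K^\perp$ contains a decomposable \emph{highest weight} vector $\omega$. As a $B$-eigenvector, $\omega$ spans a $B$-stable $2$-plane $U\subset V^*$; since $N^+$ acts nilpotently and the $N^+$-invariants of the irreducible $V^*$ are exactly $\mathbb{C}v_{\lambda^*}$, the plane $U$ must contain $v_{\lambda^*}$. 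Writing $U=\langle v_{\lambda^*},v_\nu\rangle$ with $v_\nu$ a weight vector (as $U$ is also $T$-stable), the requirement $e_\alpha v_\nu\in U$ for every simple $\alpha$, together with the fact that $v_\nu$ is not a highest weight vector, forces $\nu+\beta=\lambda^*$ for some simple $\beta$. Therefore $\omega$ is a scalar multiple of $v_{\lambda^*}\wedge v_{\lambda^*-\beta}$, its weight $2\lambda^*-\beta$ is dominant, and it occurs in $K^\perp$.

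I expect the main obstacle to be this converse direction, and specifically the passage from an arbitrary decomposable element of $K^\perp$ to one whose $2$-plane is $B$-stable and hence anchored at the highest weight line. The Borel fixed point theorem makes this clean, but two points need care: that decomposability survives the reduction (it does, being a closed $G$-stable condition), and that the resulting weight is forced to be of the precise shape $2\lambda^*-\beta$ rather than some lower dominant weight of $\bigwedge^2 V^*$ — this is what the analysis of the $B$-stable plane delivers. The maximality and multiplicity-one bookkeeping for the weight $2\lambda^*-\beta$ is routine but underlies both implications, so I would isolate it as a preliminary observation before running either direction.
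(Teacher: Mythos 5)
Your argument is correct, but there is nothing in the paper to compare it against: Theorem \ref{th:PS} is imported verbatim from \cite{Papadima-Suciu} and used as a black box, with no proof given. What you have written is essentially a reconstruction of the original Papadima--Suciu argument. The equivalence of the first two items is, as you say, the identification of $\mathcal{R}(V,K)\setminus\{0\}$ with the support of $\mathcal{W}(V,K)$ --- itself a nontrivial theorem of that paper, so your proof is self-contained only modulo that input. For the equivalence with the third condition your two key points are the right ones and both check out. First, the weight space $\left(\bigwedge^2V^*\right)_{2\lambda^*-\beta}$ is at most one-dimensional and spanned by the decomposable highest weight vector $v_{\lambda^*}\wedge f_\beta v_{\lambda^*}$, which is nonzero precisely when $\langle\lambda^*,\beta^\vee\rangle\geq1$; since this is also exactly the condition for $2\lambda^*-\beta$ to be dominant, the qualifier ``dominant'' in the statement loses no cases, and multiplicity one forces this decomposable vector into $K^\perp$ whenever the weight occurs there. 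Second, in the converse direction the Borel fixed point theorem applied to the projectivization of the closed $G$-stable cone of decomposables inside $K^\perp$ does produce a decomposable $B$-eigenvector, whose associated $2$-plane $U\subset V^*$ is $B$-stable, hence contains the highest weight line (unipotent action has nonzero invariants in $U$, and $(V^*)^{N^+}=\mathbb{C}v_{\lambda^*}$ by irreducibility), and whose second weight $\nu$ must satisfy $\nu=\lambda^*-\beta$ for some simple $\beta$ because $e_\alpha v_\nu\in U$ for all simple $\alpha$ while $v_\nu$ cannot be a second highest weight vector. This is a complete and correct proof of the equivalence of the first and third conditions, and it is worth isolating the multiplicity-one computation as a lemma exactly as you propose.
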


\section{Kac-Moody Lie algebras}

A natural context in which Koszul modules appear is that of \emph{graded Lie algebras}. Let $\mathfrak{g}$ be a Lie algebra and let $\mathfrak{g}=\bigoplus_{i\in\mathbb{Z}}\mathfrak{g}_i$ be a direct sum decomposition such that $[\mathfrak{g}_i,\mathfrak{g}_j]\subset\mathfrak{g}_{i+j}$. The Lie bracket is linear and skew-symmetric and thus defines a map $[\cdot,\cdot]:\bigwedge^2\mathfrak{g}_1\rightarrow\mathfrak{g}_2$. Now if $\mathfrak{g}_1$ is finite-dimensional, we can associate with $\mathfrak{g}$ the Koszul module $\mathcal{W}(\mathfrak{g}_1,K)$, where $K$ is the kernel of $[\cdot,\cdot]$. Note that since $[\mathfrak{g}_0,\mathfrak{g}_i]\subset\mathfrak{g}_{i}$ for each $i\in\mathbb{Z}$, the graded pieces $\mathfrak{g}_i$ are representations of the Lie subalgebra $\mathfrak{g}_0\subset\mathfrak{g}$. Hence the same is true for $\mathcal{W}(\mathfrak{g}_1,K)$ and it provides an example of an equivariant Koszul module. 

We will be interested in a very particular class of graded Lie algebras: \emph{Kac-Moody Lie algebras}. They include all finite-dimensional Lie algebras (classical and exceptional) and provide their natural generalization to the infinite-dimensional setting. Here we will only provide an overview of the construction of Kac-Moody Lie algebras which will be sufficient for our purposes; for details the reader may consult \cite{Kac}.

Let $A=(a_{i,j})_{1\leq i,j\leq n}$ be a complex $n\times n$ matrix satysfying the following conditions:
\begin{itemize}
\item $a_{i,i}=2$
\item if $i\neq j$, then $a_{i,j}\in\mathbb{Z}_{\leq 0}$
\item $a_{i,j}=0\implies a_{j,i}=0$
\end{itemize}	
Such a matrix is called a \emph{generalized Cartan matrix}. With such matrix we also associate its \emph{generalized Dynkin diagram} which is a convenient way of encoding all the necessary information. The generalized Dynkin diagram of $A$ is a directed graph with multiple edges. It has nodes $\alpha_1,\cdots,\alpha_n$ corresponding to the columns of $A$ and the following edges between each two nodes:

\begin{center}
\begin{tikzpicture}
\node [circle,fill=white,draw,label=below:$\alpha_i$] (1) at (0,0) {};
\node [circle,fill=white,draw,label=below:$\alpha_j$] (2) at (2,0) {};
\draw[edge] (1) to [bend left] node[midway,above]{$-a_{j,i}$} (2);
\draw[edge] (2) to [bend left] node[midway,below]{$-a_{i,j}$} (1);
\end{tikzpicture} 
\end{center}
We do not draw edges which would be labelled with $0$ and sometimes replace arrows of multiplicity $1,2$ and $3$ with arrows composing of an appropriate number of lines.

Let $A$ be a generalized Cartan matrix. A \textit{realization} of $A$ is a vector space $\mathfrak{h}$ together with subsets $H^\vee=\{\alpha^\vee_1,\cdots,\alpha^\vee_n\}\subset\mathfrak{h}$ and $H=\{\alpha_1,\cdots,\alpha_n\}\subset\mathfrak{h}^*$. These objects are required to satisfy the following conditions: $\dim\mathfrak{h}=n+\text{corank}(A)$, both $H$ and $H^*$ are linearly independent and $\alpha_i(\alpha_j^\vee)=a_{i,j}$. It is not hard to see that a realization of $A$ always exists and that it is unique up to an isomorphism.

To define a Kac-Moody Lie algebra $\mathfrak{g}(A)$ we begin by defining an auxiliary Lie algebra $\widetilde{\mathfrak{g}}(A)$. It is generated by the vector space $\mathfrak{h}$ in the realization of $A$ and two sets of generators $e_i,f_i$ for $i=1,\cdots,n$. On these generators we impose the following relations:
\begin{itemize}
\item $[\mathfrak{h},\mathfrak{h}]=\{0\}$;
\item $[e_i,f_i]=\delta_{i,j}\alpha^\vee_i$;
\item $[h,e_i]=\alpha_i(h)\cdot e_i$ for all $h\in\mathfrak{h}$;
\item $[h,f_i]=-\alpha_i(h)\cdot f_i$ for all $h\in\mathfrak{h}$.
\end{itemize}
The Lie algebra $\widetilde{\mathfrak{g}}(A)$ contains the unique maximal ideal $\mathfrak{r}$ such that $\mathfrak{r}\cap\mathfrak{h}=\{0\}$. Now we define the \textit{Kac-Moody Lie algebra} associated with $A$ to be $\mathfrak{g}(A):=\widetilde{\mathfrak{g}}(A)/\mathfrak{r}$.

Kac-Moody Lie algebras $\mathfrak{g}(A)$ are naturally divided into three rather different classes:
\begin{itemize}
	\item if $\det A>0$, $\mathfrak{g}(A)$ is of finite type;
	\item if $\det A=0$, $\mathfrak{g}(A)$ is of affine type;
	\item if $\det A<0$, $\mathfrak{g}(A)$ is of indefinite type.
\end{itemize}
Kac-Moody Lie algebras of finite type are exactly those for which $\dim\mathfrak{g}(A)<\infty$. Thus they are precisely the well-known finite-dimensional, semi-simple Lie algebras (classical and exceptional). Among indefinite matrices we distinguish a class of \textit{hyperbolic type}. These are matrices for which each proper submatrix is of finite or affine type. We will use the phrase 'of ... type' to describe all of the three related objects: the matrix $A$, the generalized Dynkin diagram $D$ and the Lie algebra $\mathfrak{g}(A)=\mathfrak{g}(D)$. Thus for example a diagram is of hyperbolic type when all of its proper subdiagrams are of finite or affine type.

One of the most important features of Lie algebras $\widetilde{\mathfrak{g}}(A)$ and $\mathfrak{g}(A)$ is their direct sum decomposition with respect to the $\mathfrak{h}$-eigenvalues. Let $(\mathfrak{h},H^\vee,H)$ be a realization of $A$. The elements of $H$ are called \textit{simple roots} and the lattice spanned by $H\subset\mathfrak{h}^*$ is called the \textit{root lattice} $Q$. We denote by $Q_+$, resp. $Q_-$ the semigroups generated by $H$, resp. $-H$. Then we have direct sum decompositions:
\begin{equation}\label{direct_sum_decomp}
\widetilde{\mathfrak{g}}(A)=\left(\displaystyle\bigoplus_{\alpha\in Q_-}\widetilde{\mathfrak{g}}_{\alpha}\right)\oplus\mathfrak{h}\oplus\left(\displaystyle\bigoplus_{\alpha\in Q_+}\widetilde{\mathfrak{g}}_{\alpha}\right)\quad\textnormal{and}\quad
\mathfrak{g}(A)=\left(\displaystyle\bigoplus_{\alpha\in Q_-}\mathfrak{g}_{\alpha}\right)\oplus\mathfrak{h}\oplus\left(\displaystyle\bigoplus_{\alpha\in Q_+}\mathfrak{g}_{\alpha}\right),
\end{equation}
where for any element $g\in\widetilde{\mathfrak{g}}_\alpha\cup\mathfrak{g}_\alpha$ we have $[h,g]=\alpha(h)g$. Elements of the root lattice $\alpha\in Q$ such that $\mathfrak{g}_\alpha\neq\{0\}$ are called \textit{roots of} $\mathfrak{g}(A)$.

The $\mathbb{Z}$-grading $\bigoplus_{i\in\mathbb{Z}}\mathfrak{g}_i$ on a Kac-Moody Lie algebra $\mathfrak{g}:=\mathfrak{g}(A)$ is a coarser version of the root space decomposition (\ref{direct_sum_decomp}). Let $\alpha_1,\cdots,\alpha_n$ be the simple roots of $\mathfrak{g}(A)$ and pick a distinguished root $\alpha:=\alpha_1$. Any positive root $\delta$ can be uniquely written as $\sum a_j\alpha_j$ for some non-negative integers $a_i$, $i=1,\cdots,n$. Let us define
$$Q_+^{\alpha}(i):=\{\delta\in Q_+: \delta=\sum a_j\alpha_j\textnormal{ with }a_1=i\}$$
Then the graded pieces $\mathfrak{g}_i$ of $\mathfrak{g}(A)$ are defined as
$$
\mathfrak{g}_i:=\bigoplus_{\delta\in Q_+^{\alpha}(i)}\mathfrak{g}_\delta\quad\textnormal{for any}\quad i\in\mathbb{Z}
$$
This grading has many properties which are direct consequences of the root space decomposition. For example the duality $\mathfrak{g}_i\simeq\left(\mathfrak{g}_{-i}\right)^*$ follows immediately from the fact that the roots of $\mathfrak{g}$ are symmetric about the origin.

The following simple lemmas will be of use later:

\begin{lemma}\label{l:connected}
	
	Let $\delta=\sum a_i\alpha_i$ be a root of a Kac-Moody Lie algebra $\mathfrak{g}$. The subgraph of the Dynkin diagram consisting of the nodes corresponding to the simple roots $\alpha_i$ with $a_i\neq0$ is connected.
\end{lemma}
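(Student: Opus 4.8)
The plan is to reduce to the case of a positive root and then induct on the height $\operatorname{ht}(\delta)=\sum_i a_i$. Since the roots of $\mathfrak{g}$ are symmetric about the origin and the support of $\delta$ coincides with that of $-\delta$, it suffices to treat $\delta\in Q_+$. The base case $\operatorname{ht}(\delta)=1$ is immediate, as then $\delta=\alpha_i$ is a simple root whose support is a single node.

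For the inductive step I assume $\operatorname{ht}(\delta)\geq 2$. The key structural input is that the positive part $\mathfrak{n}_+=\bigoplus_{\alpha\in Q_+\setminus\{0\}}\mathfrak{g}_\alpha$ is generated as a Lie algebra by $e_1,\dots,e_n$, so every vector of weight $\delta$ is a combination of iterated brackets $[e_{j_1},[e_{j_2},\dots]]$ with $\alpha_{j_1}+\dots+\alpha_{j_k}=\delta$. Writing such a bracket as $[e_{j_1},Y]$ with $Y$ of weight $\delta-\alpha_{j_1}$ gives $\mathfrak{g}_\delta=\sum_{i}[e_i,\mathfrak{g}_{\delta-\alpha_i}]$, where the $i$-th summand can be non-zero only when $\delta-\alpha_i\in Q_+$, which forces $i\in\operatorname{supp}(\delta)$. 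As $\mathfrak{g}_\delta\neq 0$, I fix an index $i\in\operatorname{supp}(\delta)$ with $[e_i,\mathfrak{g}_{\delta'}]\neq 0$ for $\delta':=\delta-\alpha_i$; in particular $\delta'$ is a positive root, so by the inductive hypothesis $\operatorname{supp}(\delta')$ is connected. Comparing coefficients yields $\operatorname{supp}(\delta)=\operatorname{supp}(\delta')\cup\{i\}$, so if $i\in\operatorname{supp}(\delta')$ the support is unchanged and we are done.

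It remains to handle the case $i\notin\operatorname{supp}(\delta')$, in which the task is to show that node $i$ is adjacent in the Dynkin diagram to some node of $\operatorname{supp}(\delta')$; this is the step I expect to be the crux. I argue by contradiction: suppose $a_{i,j}=0$ for every $j\in\operatorname{supp}(\delta')$. The Serre relations valid in $\mathfrak{g}(A)$ give $[e_i,e_j]=(\operatorname{ad} e_i)^{1-a_{i,j}}(e_j)=0$ for each such $j$. Since the $\alpha_j$ are linearly independent, $\mathfrak{g}_{\delta'}$ is spanned by iterated brackets of the $e_j$ with $j\in\operatorname{supp}(\delta')$, hence lies in the subalgebra $\mathfrak{s}$ generated by $\{e_j:j\in\operatorname{supp}(\delta')\}$. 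Now $\operatorname{ad} e_i$ is a derivation of $\mathfrak{g}$, so its kernel is a Lie subalgebra; as this kernel contains every generator $e_j$ of $\mathfrak{s}$, it contains all of $\mathfrak{s}$ and in particular $\mathfrak{g}_{\delta'}$. Thus $[e_i,\mathfrak{g}_{\delta'}]=0$, contradicting the choice of $i$. Hence $i$ is adjacent to $\operatorname{supp}(\delta')$, and $\operatorname{supp}(\delta)=\operatorname{supp}(\delta')\cup\{i\}$ is connected, completing the induction. The only external fact invoked beyond the grading and the generation of $\mathfrak{n}_+$ by the $e_i$ is the degree-one Serre relation $[e_i,e_j]=0$ for non-adjacent nodes, which is standard for $\mathfrak{g}(A)$.
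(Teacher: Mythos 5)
Your proof is correct. Note, though, that the paper does not prove this lemma at all --- it simply cites Lemma 1.6 of Kac's book --- so what you have written is a self-contained replacement for that citation rather than an alternative to an argument in the text. Your induction on height is sound: the identity $\mathfrak{g}_\delta=\sum_i[e_i,\mathfrak{g}_{\delta-\alpha_i}]$ follows from the generation of the positive part by the $e_i$ (the same fact the paper records in its proof of Lemma \ref{l:generating}), the coefficient comparison $\operatorname{supp}(\delta)=\operatorname{supp}(\delta')\cup\{i\}$ is immediate, and the contradiction step correctly combines the relation $[e_i,e_j]=0$ for $a_{i,j}=0$ with the observation that the kernel of the derivation $\operatorname{ad}e_i$ is a subalgebra containing the generators of $\mathfrak{s}\supset\mathfrak{g}_{\delta'}$. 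One point worth making explicit: the relation $[e_i,e_j]=0$ for non-adjacent $i,j$ fails in the auxiliary algebra $\widetilde{\mathfrak{g}}(A)$ (whose positive part is free on the $e_i$) and holds only after passing to the quotient $\mathfrak{g}(A)=\widetilde{\mathfrak{g}}(A)/\mathfrak{r}$; since the lemma concerns $\mathfrak{g}(A)$ this is exactly what you need, and it is the one place where your argument genuinely uses the quotient. This is in substance the same disconnection-forces-vanishing mechanism as Kac's own proof, so your write-up could serve as an inline proof if the author preferred not to defer to the reference.
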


\begin{proof} This is Lemma 1.6 from \cite{Kac}.
\end{proof}

\begin{lemma}\label{l:generating} Let $\delta=\sum a_i\alpha_i$ be a root of a Kac-Moody Lie algebra $\mathfrak{g}$ and let $\mathfrak{g}'$ be the Lie subalgebra generated by the root spaces $\mathfrak{g}_{\pm\alpha_i}$ for all indices $i$ such that $a_i\neq0$. Then $\mathfrak{g}_\delta\subset\mathfrak{g}'$. \end{lemma}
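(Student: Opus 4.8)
The plan is to reduce everything to the fundamental structural fact that the positive part $\mathfrak{n}_+:=\bigoplus_{\alpha\in Q_+}\mathfrak{g}_\alpha$ of a Kac-Moody algebra is generated, as a Lie algebra, by the Chevalley generators $e_1,\dots,e_n$, which span the (one-dimensional) simple root spaces $\mathfrak{g}_{\alpha_i}$; dually $\mathfrak{n}_-$ is generated by the $f_i$ spanning $\mathfrak{g}_{-\alpha_i}$. This, together with the fact that every root is either positive or negative, is part of the standard structure theory in \cite{Kac}. Using it, I may assume without loss of generality that $\delta\in Q_+$, the negative case being entirely symmetric after replacing the $e_i$ by the $f_i$.

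Granting this, the root space $\mathfrak{g}_\delta$ is spanned by iterated brackets of the form $[e_{i_1},[e_{i_2},[\cdots,e_{i_k}]\cdots]]$. First I would observe that any such bracket is homogeneous of degree $\alpha_{i_1}+\cdots+\alpha_{i_k}$ with respect to the root-space decomposition, since $[\mathfrak{g}_\beta,\mathfrak{g}_\gamma]\subset\mathfrak{g}_{\beta+\gamma}$. Consequently a bracket of this shape can contribute to $\mathfrak{g}_\delta$ only when $\alpha_{i_1}+\cdots+\alpha_{i_k}=\delta$.

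The key step is then purely combinatorial. Writing $\delta=\sum_i a_i\alpha_i$ and invoking the linear independence of the simple roots (guaranteed by the definition of a realization), the identity $\alpha_{i_1}+\cdots+\alpha_{i_k}=\sum_i a_i\alpha_i$ forces each simple root $\alpha_i$ to occur exactly $a_i$ times among the indices $i_1,\dots,i_k$. In particular, every generator appearing in such a bracket is some $e_i$ with $a_i\neq0$, i.e. an element of $\mathfrak{g}_{\alpha_i}$ for an index with $a_i\neq0$.

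Finally, since each such $\mathfrak{g}_{\pm\alpha_i}$ with $a_i\neq0$ lies in $\mathfrak{g}'$ by definition and $\mathfrak{g}'$ is closed under the bracket, every spanning bracket of $\mathfrak{g}_\delta$ lies in $\mathfrak{g}'$, whence $\mathfrak{g}_\delta\subset\mathfrak{g}'$. I expect the only non-formal ingredient to be the generation statement for $\mathfrak{n}_+$; once it is in hand, the remainder is a direct bookkeeping argument resting on the uniqueness of the expression of $\delta$ in the basis of simple roots.
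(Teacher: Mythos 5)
Your argument is correct and is essentially the paper's own proof: both rest on the fact that $\mathfrak{g}_\delta$ is spanned by iterated brackets of the generators $e_{i_1},\dots,e_{i_N}$ (resp.\ $f_{i_1},\dots,f_{i_N}$) with $\alpha_{i_1}+\cdots+\alpha_{i_N}=\delta$, so that linear independence of the simple roots forces every generator occurring to have $a_i\neq0$. You merely spell out the final bookkeeping step that the paper leaves implicit in ``the claim follows,'' and work directly in $\mathfrak{g}$ rather than first in $\widetilde{\mathfrak{g}}$ and then passing to the quotient.
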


\begin{proof} Let $\alpha_1,\cdots,\alpha_n$ be the simple roots of $\mathfrak{g}$ and $e_1,\cdots,e_n$ the corresponding generators. Directly from the definition of a Kac-Moody Lie algebra it follows that for any positive root $\delta$ the eigenspace $\widetilde{\mathfrak{g}_\delta}$ inside $\widetilde{\mathfrak{g}}$ is generated by the elements $[e_{i_1},[e_{i_2},[\cdots,[e_{i_{N-1}},e_{i_N}]\cdots]]]$ with $\alpha_{i_1}+\cdots+\alpha_{i_N}=\delta$. Hence the same is true for the Lie algebra $\mathfrak{g}$. The claim follows.\end{proof}

\begin{lemma}\label{l:subdiagram}
	Let $D'$ be a subgraph of a Dynkin diagram $D$ and let $\mathfrak{g}'\subset\mathfrak{g}(D)$ be the Lie subaglebra generated by root spaces $\mathfrak{g}_{\pm\alpha}$ for simple roots $\alpha$ corresponding to the nodes of $D'$. Then $\mathfrak{g}'=\mathfrak{g}(D')$.
\end{lemma}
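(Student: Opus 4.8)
The plan is to reduce the statement to an isomorphism of the \emph{positive parts} and then identify these with the positive part of $\mathfrak{g}(A')$, where $A'=(a_{ij})_{i,j\in J}$ is the principal submatrix of the generalized Cartan matrix $A$ indexed by the set $J$ of nodes of $D'$. Since each simple root space is one-dimensional, $\mathfrak{g}'$ is the subalgebra generated by $\{e_i,f_i:i\in J\}$, and in particular it contains each coroot $\alpha_i^\vee=[e_i,f_i]$. Every iterated bracket of these generators is an $\mathfrak{h}$-eigenvector of weight in the sublattice $Q_J:=\bigoplus_{i\in J}\mathbb{Z}\alpha_i$, so $\mathfrak{g}'\subseteq\operatorname{span}\{\alpha_i^\vee:i\in J\}\oplus\bigoplus_{0\neq\delta\in Q_J}\mathfrak{g}_\delta$; by Lemma \ref{l:generating} the reverse inclusion holds as well. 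Thus the lemma amounts to identifying the root spaces $\mathfrak{g}_\delta$ ($\delta\in Q_J$) of $\mathfrak{g}(A)$ with those of $\mathfrak{g}(A')$. Using the symmetry of the root system about the origin (equivalently, the Chevalley involution, which fixes $J$ and exchanges the positive and negative parts), it suffices to treat the positive side: writing $\mathfrak{n}_+(A):=\bigoplus_{\alpha\in Q_+}\mathfrak{g}_\alpha$, I want $\mathfrak{m}:=\langle e_i:i\in J\rangle\cong\mathfrak{n}_+(A')$ as graded Lie algebras.

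For this I would use that the auxiliary algebra has \emph{free} nilpotent part: $\widetilde{\mathfrak{n}}_+(A)$ is the free Lie algebra on $e_1,\dots,e_n$ (Kac, Theorem 1.2 in \cite{Kac}). A free Lie subalgebra on a subset of free generators is again free, so $\widetilde{\mathfrak{n}}_+(A')$ is canonically the free Lie subalgebra $F_J$ on $\{e_i:i\in J\}$ inside $\widetilde{\mathfrak{n}}_+(A)$. Writing $\mathfrak{r}_+=\mathfrak{r}\cap\widetilde{\mathfrak{n}}_+$ for the positive part of the defining ideal, one has $\mathfrak{n}_+(A)=\widetilde{\mathfrak{n}}_+(A)/\mathfrak{r}_+(A)$ and $\mathfrak{m}=F_J/(F_J\cap\mathfrak{r}_+(A))$, whereas $\mathfrak{n}_+(A')=F_J/\mathfrak{r}_+(A')$. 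Hence everything reduces to a single identity of graded ideals of $F_J$:
\[
F_J\cap\mathfrak{r}_+(A)=\mathfrak{r}_+(A').
\]

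To prove this identity I would invoke the intrinsic description of the maximal ideal: because $\mathfrak{r}$ is the largest \emph{graded} ideal meeting $\mathfrak{h}$ trivially (and sums of weight-graded ideals meeting $\mathfrak{h}$ trivially again meet it trivially), a weight vector $x$ lies in $\mathfrak{r}_+$ if and only if the ideal it generates meets $\mathfrak{h}$ trivially, i.e. no iterated bracket of $x$ with the generators produces a non-zero element of weight $0$. For $x\in F_J$ of weight $\beta\in Q_J$, reaching weight $0$ forces the lowering operators used to cancel $\beta$ to be the $f_i$ with $i\in J$, and the resulting weight-$0$ outputs are computed using only the entries $a_{ij}$ with $i,j\in J$. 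Consequently membership of such an $x$ in $\mathfrak{r}_+(A)$ is governed by exactly the same conditions as membership in $\mathfrak{r}_+(A')$, giving the displayed equality and hence $\mathfrak{m}\cong\mathfrak{n}_+(A')$.

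The main obstacle is precisely this last step: one must rule out that brackets which temporarily leave $Q_J$ (using $e_j,f_j$ with $j\notin J$ in balanced pairs) create new weight-$0$ elements from $x\in F_J$, so that the computation genuinely localizes to the indices in $J$. For \emph{symmetrizable} $A$ this is transparent, since $\mathfrak{r}$ is then the radical of the invariant contravariant bilinear form and the restriction of that form to $Q_J$-weight spaces depends only on $A'$; the general case is exactly the structural input to be imported from \cite{Kac}. Granting the ideal identity, combining the positive part with the involution and with $\operatorname{span}\{\alpha_i^\vee:i\in J\}$ yields $\mathfrak{g}'\cong\mathfrak{g}(A')$ --- literally the derived subalgebra when $A'$ is degenerate, the remaining central directions of the Cartan subalgebra of $\mathfrak{g}(A')$ being invisible to every positive-degree root space and recoverable by choosing a compatible realization.
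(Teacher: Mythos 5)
The paper itself does not prove this lemma; it simply cites Exercise 1.9 of \cite{Kac}, so your attempt is necessarily a different and more explicit route. Your overall strategy is the standard one and the reductions are sound: splitting off the Cartan directions, using the symmetry of the root system to work only with the positive parts, identifying $\langle e_i : i \in J\rangle$ with the free Lie subalgebra $F_J$ inside the free Lie algebra $\widetilde{\mathfrak{n}}_+(A)$, and reducing everything to the single identity $F_J\cap\mathfrak{r}_+(A)=\mathfrak{r}_+(A')$. Your caveat about getting only the derived algebra when $A'$ is degenerate is also correct and consistent with how the paper uses the lemma in Section 3 (the extra abelian summand $\mathbb{C}^r$).

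However, the decisive step is exactly that ideal identity, and, as you yourself concede, your argument for it is complete only in the symmetrizable case (via the contravariant form). The heuristic you offer in general --- that reaching weight $0$ forces all lowering operators to be $f_i$ with $i\in J$ --- does not hold as stated: nothing a priori prevents a word such as $\operatorname{ad}(f_{j})\operatorname{ad}(e_{j})\cdots$ with $j\notin J$, applied to $x\in F_J$, from producing a nonzero element of $\mathfrak{h}$ even though every purely-$J$ word fails to. So as written the proof is incomplete precisely where the content of the exercise lies. The gap can be closed without symmetrizability, but by a different mechanism from the one you sketch: use the characterization of $\mathfrak{r}_+$ as the maximal graded ideal of $\widetilde{\mathfrak{g}}$ contained in $\widetilde{\mathfrak{n}}_+$ (equivalently, the maximal $\operatorname{ad}(f_j)$-stable ideal of the free Lie algebra $\widetilde{\mathfrak{n}}_+$), together with the observation that for $j\notin J$ the derivation $\operatorname{ad}(f_j)$ kills every generator $e_i$ with $i\in J$ and hence vanishes identically on $F_J$. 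Then the ideal of $\widetilde{\mathfrak{n}}_+(A)$ generated by $\mathfrak{r}_+(A')\subset F_J$ is $\operatorname{ad}(f_j)$-stable for every $j$ (for $j\notin J$ by the vanishing, for $j\in J$ because the relevant brackets involve only entries of $A'$), so it lies in $\mathfrak{r}_+(A)$; conversely, $F_J\cap\mathfrak{r}_+(A)$ is an $\operatorname{ad}(f_j)$-stable ($j\in J$) graded ideal of $F_J$ meeting $\mathfrak{h}$ trivially, hence lies in $\mathfrak{r}_+(A')$. With this substitution your outline becomes a complete and citation-free proof.
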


\begin{proof}
	This is Exercise 1.9 from \cite{Kac}.
\end{proof}

\section{Koszul modules of Kac-Moody Lie algebras}

Since we want to study the Koszul module associated to a graded Lie algebra $\mathfrak{g}$, we are particularly interested in the low degree graded pieces $\mathfrak{g}_0,\ \mathfrak{g}_1$ and $\mathfrak{g}_2$. Let us start with the Lie subalgebra $\mathfrak{g}_0$. Clearly it contains the Cartan subalgebra $\mathfrak{h}$ and the root spaces $\mathfrak{g}_{\pm\alpha_i}$ for $i\neq1$. To recover its precise form, we use Lemma \ref{l:subdiagram}. It implies that the Lie algebra $\mathfrak{g}_0$ is a direct sum of a Kac-Moody Lie algebra $\mathfrak{g}\left(D\setminus\{\alpha\}\right)$ and an abelian Lie algebra $\mathbb{C}^r$, where $r=2$ if $D$ is affine, $r=0$ if $D\setminus\{\alpha\}$ is affine and $r=1$ otherwise. Since we want to employ representation-theoretic methods to the study of Koszul modules, we must require that the Lie algebra $\mathfrak{g}_0$ is finite-dimensional. It is a classical fact that Kac-Moody Lie algebra $\mathfrak{g}(D)$ is finite-dimensional if and only if its generalized Dynkin diagram is of finite type. This motivates the following definition:

\begin{definition}
Let $D$ be a generalized Dynkin diagram, let $\alpha$ be a node and let $D'$ be the subdiagram of $D$ obtained by removing $\alpha$. We call a pair $(D,\alpha)$ \emph{admissible} if each connected component of $D'$ is of finite type.
\end{definition}

Now assume that $(D,\alpha):=(D,\alpha_1)$ is an admissible pair. We want to understand $\mathfrak{g}_1$ as a representation of $\mathfrak{g}_0$. Recall that $\mathfrak{g}_1$ is the dual of $\mathfrak{g}_{-1}$. It is not hard to see that the standard generator $f_1$ of $\mathfrak{g}$, which satisfies $\mathfrak{g}_{-\alpha_1}=\mathbb{C}f_1$, is the unique highest weight vector for $\mathfrak{g}_{-1}$. Thus the graded component $\mathfrak{g}_{-1}$ as a $\mathfrak{g}_0$-module is isomorphic to $V(\alpha_1\big|_{\mathfrak{h}_0})$, where $\mathfrak{h}_0$ is the Cartan subalgebra of $\mathfrak{g}_0$. The weight $\alpha_1\big|_{\mathfrak{h}_0}$ corresponds to the first column of the generalized Cartan matrix $A$ associated with $D$, except of course with the first entry removed. Case-by-case analysis reveals that we have:

\begin{fact}\label{fact}
Let $A=\left(a_{i,j}\right)_{i,j=0,\cdots,n}$ be a generalized Cartan matrix and let $D$ be the associated generalized Dynkin graph.
Assume that $(D,\alpha_0)$ is an admissible pair. Let $\oplus_{i\in\mathbb{Z}}\mathfrak{g}_i$ be the graded Kac-Moody Lie algebra associated with this pair. Denote by $\mathfrak{g}_0^{ss}$ the semi-simple part of $\mathfrak{g}_0$ and let $\omega_1,\cdots,\omega_n$ be its fundamental weights. Then as an $\mathfrak{g}^{ss}_0$-module $\mathfrak{g}_1$ is isomorphic to $V(-\sum a_{i,0}\omega_i)$.	
\end{fact}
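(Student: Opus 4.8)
The plan is to establish that $\mathfrak{g}_{-1}$ is the irreducible $\mathfrak{g}_0^{ss}$-module with highest weight $\alpha_0|_{\mathfrak{h}_0}$, identify this weight explicitly in terms of the Cartan matrix entries, and then dualize. The preceding discussion already asserts that $f_0$ (the standard negative generator with $\mathfrak{g}_{-\alpha_0}=\mathbb{C}f_0$) is the unique highest weight vector of $\mathfrak{g}_{-1}$, and hence $\mathfrak{g}_{-1}\cong V(\alpha_0|_{\mathfrak{h}_0})$ as a $\mathfrak{g}_0$-module. I would first verify this highest-weight claim carefully: for any simple root $\alpha_i$ with $i\neq 0$, the bracket $[e_i,f_0]$ lies in $\mathfrak{g}_{\alpha_i-\alpha_0}$, which is not in $Q_-^{\alpha_0}(1)$ (it has $\alpha_0$-coefficient $-1$ but a negative $\alpha_i$-coefficient, so it is not a root by Lemma \ref{l:connected} unless it vanishes); the defining relations give $[e_i,f_0]=0$ directly since $i\neq 0$. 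Thus $f_0$ is annihilated by all the raising operators $e_i$ of $\mathfrak{g}_0^{ss}$, confirming it is a highest weight vector, and irreducibility follows because $\mathfrak{g}_{-1}$ is generated from $f_0$ under $\mathfrak{g}_0$ by Lemma \ref{l:generating}.

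\emph{The core computation} is then to determine the weight $\alpha_0|_{\mathfrak{h}_0}$ in terms of the fundamental weights $\omega_1,\dots,\omega_n$ of $\mathfrak{g}_0^{ss}$. The fundamental weights are characterized by $\omega_i(\alpha_j^\vee)=\delta_{i,j}$ for $j=1,\dots,n$, where $\alpha_1^\vee,\dots,\alpha_n^\vee$ are the coroots of $\mathfrak{g}_0^{ss}$. To express $\alpha_0|_{\mathfrak{h}_0}$ in this basis I would evaluate it on each coroot: by the realization axiom $\alpha_0(\alpha_j^\vee)=a_{0,j}$. Therefore the coefficient of $\omega_j$ in $\alpha_0|_{\mathfrak{h}_0}$ equals $a_{0,j}$, giving
\[
\alpha_0\big|_{\mathfrak{h}_0}=\sum_{i=1}^{n}a_{0,i}\,\omega_i.
\]
This shows $\mathfrak{g}_{-1}\cong V\!\left(\sum_i a_{0,i}\omega_i\right)$. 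I should note here a subtlety about symmetry of indices: the statement uses $a_{i,0}$ rather than $a_{0,i}$, so I must check which convention the Cartan matrix indexing and the edge labelling of the Dynkin diagram impose; the entries $a_{0,i}$ and $a_{i,0}$ can differ for non-symmetric matrices, and the correct one is dictated by evaluating $\alpha_0$ on the coroots of $\mathfrak{g}_0^{ss}$, which I would pin down against the conventions fixed in Section 2.

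\emph{Finally I would dualize.} Using the duality $\mathfrak{g}_1\cong(\mathfrak{g}_{-1})^*$ recorded in Section 2, and the fact that the dual of an irreducible highest weight module $V(\mu)$ is $V(-w_0\mu)$ where $w_0$ is the longest Weyl element of $\mathfrak{g}_0^{ss}$, the weight of $\mathfrak{g}_1$ is $-w_0\!\left(\sum_i a_{0,i}\omega_i\right)$. To match the clean expression $V(-\sum a_{i,0}\omega_i)$ in the statement, I expect the $a_{i,i}=2$ normalization together with the nonpositivity $a_{i,j}\in\mathbb{Z}_{\leq 0}$ to ensure the relevant weight is already $-\sum a_{i,0}\omega_i$ once the correct dualization is applied, possibly after reindexing the fundamental weights by the diagram automorphism induced by $-w_0$. \textbf{The main obstacle} I anticipate is precisely this bookkeeping: correctly tracking the transpose-versus-not indexing of the Cartan matrix and the action of $-w_0$ on fundamental weights, since a careless sign or index swap would produce the dual weight rather than the stated one. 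The representation-theoretic content is routine; the delicacy is entirely in the conventions.
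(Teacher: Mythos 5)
Your skeleton is the same as the paper's, which offers only a sketch (identify $f_0$ as the unique highest weight vector of $\mathfrak{g}_{-1}$, read its weight off the Cartan matrix, dualize, and finish by ``case-by-case analysis''). But the two points you defer are exactly where the content lies, and one of them does not resolve the way you hope. First, a sign: the weight of $f_0$ is $-\alpha_0$, not $\alpha_0$, since $[h,f_0]=-\alpha_0(h)f_0$; hence the highest weight of $\mathfrak{g}_{-1}$ is $-\alpha_0\big|_{\mathfrak{h}_0}$, whose value on each $\alpha_j^\vee$ is $-\alpha_0(\alpha_j^\vee)\ge 0$ — it is dominant precisely because the off-diagonal entries of $A$ are nonpositive. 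The weight $\sum_i a_{0,i}\omega_i$ you write down is anti-dominant and cannot be a highest weight; the paper itself uses the correct form $\lambda^*=-\alpha_0\big|_{\mathfrak{h}_0}$ in the proof of Theorem \ref{th:1}. (Also, Lemma \ref{l:connected} is not the right tool for ruling out $\alpha_i-\alpha_0$ as a root — what you need is that every root lies in $Q_+$ or $Q_-$ — though the defining relation $[e_i,f_0]=\delta_{i,0}\alpha_0^\vee=0$ settles that point anyway, as you note.)

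Second, and more seriously: the twist by $-w_0$ in the dualization step is not bookkeeping that the normalizations $a_{i,i}=2$ and $a_{i,j}\le 0$ make disappear. It is a genuine diagram automorphism $\sigma$ of $D\setminus\{\alpha_0\}$, and the highest weight of $\mathfrak{g}_1=(\mathfrak{g}_{-1})^*$ is $-\sum_i\alpha_0(\alpha_i^\vee)\,\omega_{\sigma(i)}$. Concretely, if $D\setminus\{\alpha_0\}$ is of type $A_2$ with $\alpha_0$ attached to only one node, then $\mathfrak{g}_{-1}\cong V(\omega_1)=\mathbb{C}^3$ while $\mathfrak{g}_1\cong V(\omega_2)=(\mathbb{C}^3)^*$, and no re-reading of $a_{0,i}$ versus $a_{i,0}$ repairs this, because transposing a Cartan matrix is a Langlands-dual operation, not a diagram automorphism. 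The row-versus-column question is likewise not cosmetic: for the rank-two matrix with $a_{0,1}=-1$, $a_{1,0}=-3$ the two candidate answers $V(\omega)$ and $V(3\omega)$ have dimensions $2$ and $4$, so it must be pinned against the realization axiom rather than expected to come out right. A complete argument must therefore either state the clean, convention-independent fact about $\mathfrak{g}_{-1}$ (namely $\mathfrak{g}_{-1}\cong V(-\alpha_0\big|_{\mathfrak{h}_0})$, which is all that Theorem \ref{th:1} actually uses) or carry $\sigma=-w_0$ explicitly through the finite-type components of $D\setminus\{\alpha_0\}$ — presumably what the paper's ``case-by-case analysis'' refers to. As written, your proposal proves the statement about $\mathfrak{g}_{-1}$ but stops short of the stated formula for $\mathfrak{g}_1$.
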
	

Recall that each finite-dimensional, irreducible $\mathfrak{g}^{ss}_0$-module is isomorphic to a unique representation $V(\sum a_{i}\omega_i)$ for some $a_i\in\mathbb{Z}_{\geq 0}$. Thus for every semi-simple Lie algebra $\mathfrak{k}$ and every irreducible representation $V$, both assumed to be finite-dimensional, there exists a Kac-Moody Lie algebra $\mathfrak{g}=\mathfrak{g}(A)$ and a simple root $\alpha$ such that for the associated grading $\mathfrak{k}$ is the semi-simple part of $\mathfrak{g}_0$ and $\mathfrak{g}_1\simeq V$. Indeed, let $A'=\left(a_{i,j}\right)_{i,j=1,\cdots,n}$ be the Cartan matrix of $\mathfrak{k}$ and let $V\simeq V(\sum a_{i}\omega_i)$ as above. Now let $D$ be the generalized Dynkin diagram of a generalized Cartan matrix $A=\left(a_{i,j}\right)_{i,j=0,\cdots,n}$, where $a_{i,0}:=-a_i$, $a_{0,i}:=0$ if $a_{i,0}=0$, while $a_{0,i}\in\mathbb{Z}_{<0}$ for the remaining indices. Then $(D,\alpha_0)$ is an admissible pair such that for the associated grading $\mathfrak{g}_0^{ss}\simeq\mathfrak{k}$ and $\mathfrak{g}_1\simeq V$. Of course this pair is highly non-unique.

The following easy lemma basically shows that the Lie subalgebra $\oplus_{i>0}\mathfrak{g}_i$ is generated by $\mathfrak{g}_1$.

\begin{lemma}\label{l:surjective} Let $\mathfrak{g}$ be a Kac-Moody Lie algebra and let $\bigoplus_{i\geq0}\mathfrak{g}_i$ be the grading associated with some simple root $\alpha$. Then the Lie brakcet map $[\cdot,\cdot]:\bigwedge^2\mathfrak{g}_1\rightarrow\mathfrak{g}_2$ is surjective. \end{lemma}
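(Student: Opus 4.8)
The plan is to reduce the surjectivity to a statement about individual root spaces and then to a purely formal manipulation of iterated brackets of the generators $e_i$. Recall that $\mathfrak{g}_2=\bigoplus_{\delta\in Q_+^{\alpha}(2)}\mathfrak{g}_\delta$, so it suffices to show that $\mathfrak{g}_\delta\subseteq[\mathfrak{g}_1,\mathfrak{g}_1]$ for every positive root $\delta=\sum a_j\alpha_j$ with $a_1=2$. The key structural input, borrowed from the proof of Lemma \ref{l:generating}, is that such a root space $\mathfrak{g}_\delta$ is spanned by right-normed brackets $w=[e_{i_1},[e_{i_2},[\cdots,[e_{i_{N-1}},e_{i_N}]\cdots]]]$ with $\alpha_{i_1}+\cdots+\alpha_{i_N}=\delta$. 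Since $a_1=2$, exactly two of the indices $i_k$ equal $1$: the corresponding factors are copies of $e_1\in\mathfrak{g}_1$, while every remaining $e_{i_k}$ with $i_k\neq 1$ lies in $\mathfrak{g}_0$.

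Thus it is enough to prove the following claim: any right-normed bracket $w=[x_1,[x_2,\ldots,[x_{m-1},x_m]\cdots]]$ whose factors $x_j$ all lie in $\mathfrak{g}_0\cup\mathfrak{g}_1$, with exactly two of them in $\mathfrak{g}_1$ and the rest in $\mathfrak{g}_0$, belongs to $[\mathfrak{g}_1,\mathfrak{g}_1]$. I would prove this by induction on $m$. The only property of the grading that is needed is $[\mathfrak{g}_0,\mathfrak{g}_1]\subseteq\mathfrak{g}_1$, which is immediate from $[\mathfrak{g}_i,\mathfrak{g}_j]\subseteq\mathfrak{g}_{i+j}$.

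For $m=2$ both factors lie in $\mathfrak{g}_1$ and there is nothing to prove. For $m\geq 3$ write $w=[x_1,w']$ with $w'=[x_2,\ldots,x_m]\cdots$ a right-normed bracket of $m-1$ factors. If $x_1\in\mathfrak{g}_1$, then $w'$ contains exactly one factor from $\mathfrak{g}_1$, hence has degree $1$ and lies in $\mathfrak{g}_1$; therefore $w=[x_1,w']\in[\mathfrak{g}_1,\mathfrak{g}_1]$. If instead $x_1\in\mathfrak{g}_0$, then $w'$ still contains two factors from $\mathfrak{g}_1$, so the inductive hypothesis gives $w'=\sum_s[u_s,v_s]$ with $u_s,v_s\in\mathfrak{g}_1$; expanding $w=[x_1,w']$ by the Jacobi identity yields $\sum_s\bigl([[x_1,u_s],v_s]+[u_s,[x_1,v_s]]\bigr)$, and since $[x_1,u_s],[x_1,v_s]\in[\mathfrak{g}_0,\mathfrak{g}_1]\subseteq\mathfrak{g}_1$ this again lies in $[\mathfrak{g}_1,\mathfrak{g}_1]$. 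This closes the induction and proves the lemma.

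I do not anticipate a genuine obstacle here: the argument is entirely formal once the spanning by right-normed brackets is invoked. The one point requiring minor care is the Jacobi step, where one must check that absorbing the degree-$0$ factor $x_1$ into the two $\mathfrak{g}_1$-factors keeps everything inside $\mathfrak{g}_1$, which is precisely what $[\mathfrak{g}_0,\mathfrak{g}_1]\subseteq\mathfrak{g}_1$ guarantees. A slightly more high-level alternative would be to first note that $\bigoplus_{i\geq 0}\mathfrak{g}_i$ is generated as a Lie algebra by $\mathfrak{g}_0\cup\mathfrak{g}_1$ and then invoke the general fact that a non-negatively graded Lie algebra generated in degrees $0$ and $1$ satisfies $\mathfrak{g}_2=[\mathfrak{g}_1,\mathfrak{g}_1]$; I would nonetheless prefer the direct root-space version above, as it keeps the proof short and self-contained.
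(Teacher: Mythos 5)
Your proof is correct and follows essentially the same route as the paper's: both reduce to the fact that the degree-$2$ root spaces are spanned by right-normed brackets $[e_{i_1},[\cdots,[e_{i_{N-1}},e_{i_N}]\cdots]]$ containing $e_1$ exactly twice, and then use the Jacobi identity to absorb the degree-zero factors into the two degree-one factors. The only cosmetic difference is that you induct on the total number of factors with a case split on the outermost one, whereas the paper inducts on the position of the first occurrence of $e_1$; the key Jacobi manipulation is identical.
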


\begin{proof} Recall that the root space $\mathfrak{g}_\delta$ is generated by the elements $[e_{i_1},[e_{i_2},[\cdots,[e_{i_{N-1}},e_{i_N}]\cdots]]]$ such that $\alpha_{i_1}+\cdots+\alpha_{i_N}=\delta$. Hence $\mathfrak{g}_2$ is also generated by elements of this form, more precisely those for which index $1$ appears exactly twice amongst $i_1,\cdots,i_N$. Let $i_k$ be the first index equal to $1$ and define $g:=[e_{i_{k+1}},[\cdots,[e_{i_{N-1}},e_{i_N}]\cdots]]\in\mathfrak{g}_1$, so that $[e_{i_1},[e_{i_2},[\cdots,[e_{i_{N-1}},e_{i_N}]\cdots]]]=[e_{i_1},[e_{i_2},[\cdots,[e_{i_{k-1}},[e_1,g]]\cdots]]]$. We will show that if $a,b\in\mathfrak{g}_1$, then $[e_{i_1},[e_{i_2},[\cdots,[e_{i_{k-1}},[a,b]]\cdots]]]\in\mathfrak{g}_2$ is in the image of $[\cdot,\cdot]$. If $k=1$, then there is nothing to prove. Assume now $k\geq2$. Using the Jacobi identity we obtain \begin{align*} &[e_{i_1},[e_{i_2},[\cdots,[e_{i_{k-1}},[a,b]]\cdots]]]=\\ &=-[e_{i_1},[e_{i_2},[\cdots,[e_{i_{k-2}},[a,[b,e_{i_{k-1}}]+[b,[e_{i_{k-1}},a]]\cdots]]]=\\ &=-[e_{i_1},[e_{i_2},[\cdots,[e_{i_{k-2}},[a,[b,e_{i_{k-1}}]]]]\cdots]]-[e_{i_1},[e_{i_2},[\cdots,[e_{i_{k-2}},[b,[e_{i_{k-1}},a]]]\cdots]]]\\ \end{align*} \vspace{0pt} and the claim follows by induction. \end{proof}

If $(D,\alpha)$ is an admissible pair, $\mathfrak{g}_1$ is a finite-dimensional irreducible representation of $\mathfrak{g}_0$ and it makes sense to talk about the associated Koszul module. This way we arrive at the following central definition:

\begin{definition}
Let $(D,\alpha)$ be an admissible pair, let $\mathfrak{g}:=\mathfrak{g}(D)$ and let $\bigoplus_{i\geq 0}\mathfrak{g}_i$ be the grading associated with the simple root corresponding to $\alpha$. The Koszul module associated to $(D,\alpha)$ is defined as $$\mathcal{W}(D,\alpha):=\mathcal{W}\left(\mathfrak{g}_1,\ker\left([\cdot,\cdot]:\bigwedge^2\mathfrak{g}_1\rightarrow\mathfrak{g}_2\right)\right)$$.
\end{definition}

The natural question is when the Koszul module $\mathcal{W}(D,\alpha)$ is nilpotent.

\begin{example}
	Consider the Kac-Moody Lie algebras associated with the following generalized Cartan matrices:
	$$
	\begin{pmatrix}
	2&-2\\-1&2\\
	\end{pmatrix}\text{ and }
	\begin{pmatrix}
	2&-2\\-2&2\\
	\end{pmatrix}
	$$
	\smallskip
	Their corresponding Dynkin diagrams are $D_1$ and $D_2$ respectively:
	
	\begin{center}
		\begin{tikzpicture}
		\node [circle,fill=white,draw,label=below:$\alpha_1$] (1) at (0,0) {};
		\node [circle,fill=white,draw,label=below:$\alpha_2$] (2) at (2,0) {};
		\draw[line width=1pt, double distance=5pt, -{Classical TikZ Rightarrow[length=3mm]}] (1) to (2);
		\node [circle,fill=white,draw,label=below:$\alpha_1$] (3) at (4,0) {};
		\node [circle,fill=white,draw,label=below:$\alpha_2$] (4) at (6,0) {};
		\draw[edge] (3) to [bend left] node[midway,above]{2} (4);
		\draw[edge] (4) to [bend left] node[midway,below]{2} (3);
		\end{tikzpicture}
	\end{center}
	Of course both pairs $(D_1,\alpha_1)$ and $(D_2,\alpha_1)$ are admissible and in both cases $\mathfrak{g}_0^{ss}\simeq\mathfrak{sl}(2)$. Similarly, the formula above shows that $\mathfrak{g}_1\simeq V(2\omega_1)=\textnormal{Sym}^2\mathbb{C}^2$ in both cases as well. However, in the case of $D_1$ the graded piece $\mathfrak{g}_2$ vanishes, while for $D_2$ we have $\mathfrak{g}_2\simeq\textnormal{Sym}^2\mathbb{C}^2$ and in fact all graded pieces are of this form. Thus we see that the Koszul module $\mathcal{W}(D_1,\alpha_1)$ is nilpotent, while the Koszul module $\mathcal{W}(D_2,\alpha_1)$ is not.
\end{example}

\section{Characterization of nilpotent pairs}

\begin{theorem}\label{th:1}
	Let $(D,\alpha)$ be an admissible pair and let $m$ be the maximal multiplicity of an arrow directed towards the node $\alpha$. The Koszul module $\mathcal{W}(D,\alpha)$ is nilpotent if and only if $m=1$.
\end{theorem}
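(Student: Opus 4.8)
The plan is to apply the Papadima--Suciu criterion (Theorem \ref{th:PS}) to $V=\mathfrak{g}_1$ and $K=\ker\!\left([\cdot,\cdot]\colon\bigwedge^2\mathfrak{g}_1\to\mathfrak{g}_2\right)$, and to translate its representation-theoretic conclusion into the stated combinatorial condition on $D$. Write $\alpha=\alpha_0$ and let $\alpha_1,\dots,\alpha_n$ be the remaining nodes, so that $\mathfrak{g}_0^{ss}$ has simple roots $\alpha_1|_{\mathfrak{h}_0},\dots,\alpha_n|_{\mathfrak{h}_0}$ and fundamental weights $\omega_1,\dots,\omega_n$; admissibility guarantees that $\mathfrak{g}_0^{ss}$ is a finite-dimensional semisimple algebra and $V=\mathfrak{g}_1$ a finite-dimensional irreducible module, so the criterion applies (and $K$ is a subrepresentation, being the kernel of an equivariant map). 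First I would assemble the three ingredients the criterion needs. The dual $V^*=\mathfrak{g}_{-1}$ has highest weight vector $f_0$ of weight $-\alpha$, so $\lambda^*=(-\alpha)|_{\mathfrak{h}_0}$, i.e. $\langle\lambda^*,\alpha_j^\vee\rangle=-a_{0,j}$ for every $j$. Next, since the bracket is surjective (Lemma \ref{l:surjective}) we have $\bigwedge^2\mathfrak{g}_1/K\cong\mathfrak{g}_2$, and dualizing together with the duality $\mathfrak{g}_i\cong(\mathfrak{g}_{-i})^*$ identifies $K^\perp\cong(\mathfrak{g}_2)^*\cong\mathfrak{g}_{-2}$ as $\mathfrak{g}_0^{ss}$-modules. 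Hence the weights of $K^\perp$ are exactly the $-\delta|_{\mathfrak{h}_0}$ for roots $\delta$ of $\mathfrak{g}$ whose $\alpha$-coefficient equals $2$, and the problem becomes: for which simple roots $\beta$ is $2\lambda^*-\beta$ a dominant weight of $\mathfrak{g}_{-2}$?

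Second, I would rewrite $2\lambda^*-\beta$ in the root language. For $\beta=\alpha_j$ one has $2\lambda^*-\alpha_j=-(2\alpha+\alpha_j)|_{\mathfrak{h}_0}$. Two roots with $\alpha$-coefficient $2$ that agree on $\mathfrak{h}_0$ differ by an integral combination of $\alpha_1,\dots,\alpha_n$, which are linearly independent on $\mathfrak{h}_0$; hence restriction to $\mathfrak{h}_0$ is injective on degree-two root spaces, and $2\lambda^*-\alpha_j$ occurs in $\mathfrak{g}_{-2}$ precisely when $2\alpha+\alpha_j$ is a root of $\mathfrak{g}$. A short computation of $\langle 2\lambda^*-\alpha_j,\alpha_l^\vee\rangle=-2a_{0,l}-a_{j,l}$ shows this weight is dominant if and only if $\alpha_j$ is a neighbour of $\alpha$: for $l\neq j$ both summands are non-negative, while the only genuine constraint, from $l=j$, reads $\langle\lambda^*,\alpha_j^\vee\rangle\ge 1$, i.e. $a_{0,j}\le -1$. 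In particular no non-adjacent $\beta$ can ever violate the criterion.

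Finally I would compute when $2\alpha+\alpha_j$ is a root. Since $\alpha_j-\alpha$ is never a root, the $\alpha$-root string through $\alpha_j$ runs $\alpha_j,\ \alpha_j+\alpha,\ \dots,\ \alpha_j-\langle\alpha_j,\alpha^\vee\rangle\,\alpha$, so $2\alpha+\alpha_j$ is a root if and only if $-\langle\alpha_j,\alpha^\vee\rangle=-a_{j,0}\ge 2$. By the diagram conventions $-a_{j,0}$ is the multiplicity of the arrow pointing from $\alpha_j$ toward $\alpha$, whence $m=\max_j(-a_{j,0})$. Assembling the equivalences: the criterion is violated for some $\beta$ iff some neighbour $\alpha_j$ satisfies $-a_{j,0}\ge 2$ (note that $-a_{j,0}\ge 2$ forces $a_{0,j}\le -1$, so dominance is automatic), iff $m\ge 2$; as $\alpha$ has at least one neighbour we always have $m\ge 1$, and therefore $\mathcal{W}(D,\alpha)$ is nilpotent if and only if $m=1$. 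The main obstacle is the middle step: matching the representation-theoretic condition ``$2\lambda^*-\beta$ is a dominant weight of $K^\perp$'' with the purely combinatorial ``$2\alpha+\alpha_j$ is a root'', which hinges on both the identification $K^\perp\cong\mathfrak{g}_{-2}$ and the injectivity of restriction to $\mathfrak{h}_0$ on degree-two root spaces; once this dictionary is set up, the root-string count is routine.
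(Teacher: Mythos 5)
Your argument is correct and its skeleton coincides with the paper's: both apply Theorem \ref{th:PS} to $(\mathfrak{g}_1,\ker[\cdot,\cdot])$, identify $\lambda^*=-\alpha\big|_{\mathfrak{h}_0}$ via the highest weight vector $f_0$ of $\mathfrak{g}_{-1}$, use Lemma \ref{l:surjective} to trade $K^\perp$ for $\mathfrak{g}_{-2}$, and reduce nilpotency to the question of whether $2\alpha+\alpha_j$ is a root. Two points of genuine divergence. First, your handling of dominance is more careful than the paper's: the paper asserts that weights of the form $-2\alpha\big|_{\mathfrak{h}_0}-\alpha_j$ ``will always be dominant,'' which is true only for $\alpha_j$ adjacent to $\alpha$; your explicit pairing computation, together with the remark that the root condition forces adjacency (since $a_{j,0}\neq0\iff a_{0,j}\neq0$), closes that small gap. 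Second, and more substantially, the final step: the paper first passes to the rank-two subalgebra generated by $e_0,e_j,f_0,f_j$ (Lemmas \ref{l:generating} and \ref{l:subdiagram}) and then settles whether $2\alpha_0+\alpha_1$ is a root by the explicit computation of $[f_0,[e_0,[e_0,e_1]]]$ in $\widetilde{\mathfrak{g}}$ plus an ideal argument, keeping everything elementary and self-contained. You instead invoke the unbroken $\alpha$-root string through $\alpha_j$ in $\mathfrak{g}$ itself, obtaining $2\alpha+\alpha_j\in\Delta$ iff $-\langle\alpha_j,\alpha^\vee\rangle\geq2$ in one line and bypassing the rank-two reduction entirely; this is essentially the paper's first Remark (the $\mathfrak{sl}(2)$-string argument) applied globally. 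What your route buys is brevity; what it costs is an extra input: the string property for a real (here simple) root of an arbitrary Kac--Moody algebra is Proposition 5.1(c) of \cite{Kac} and rests on the local nilpotency of $\operatorname{ad}e_0$ and $\operatorname{ad}f_0$ — it is not among the lemmas quoted in the paper, so you must cite it rather than treat it as routine.

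One bookkeeping caution. Your criterion is $-\alpha_j(\alpha^\vee)=-a_{j,0}\geq2$ (in the paper's stated normalization $\alpha_i(\alpha_j^\vee)=a_{i,j}$), and you declare $-a_{j,0}$ to be the multiplicity of the arrow from $\alpha_j$ toward $\alpha$. Under the paper's \emph{literal} diagram convention (the arrow from $\alpha_i$ to $\alpha_j$ is labelled $-a_{j,i}$), that number would instead label the arrow pointing \emph{away} from $\alpha$. The paper's own examples (the $B_2$ table and the $D_1$ example) are consistent only with your reading, so your conclusion is the intended and correct one, but you should state explicitly which convention you are using for arrows so that the final translation from ``$2\alpha+\alpha_j$ is a root'' to ``$m\geq2$'' is unambiguous.
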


\begin{proof}
Put $\mathfrak{g}:=\mathfrak{g}(D)$, $\mathfrak{k}:=\ker\left([\cdot,\cdot]:\bigwedge^2\mathfrak{g}_1\rightarrow\mathfrak{g}_2\right)$ and $\mathfrak{k}^\perp:=\{a\in\bigwedge^2\mathfrak{g}^*_1: a\big |_\mathfrak{k}\equiv0\}$.
Let $A=(a_{i,j})_{i,j=0,\cdots,n}$ be the generalized Cartan matrix associated with $D$ and let $\alpha_0,\alpha_1,\cdots,\alpha_n$ be the simple roots of $\mathfrak{g}$. We assume that $\alpha=\alpha_0$ and then we identify $\alpha_1,\cdots,\alpha_n$ with the simple roots of $\mathfrak{g}^{ss}_0$. Note that the condition on the multiplicity of edges adjacent to $\alpha$ is equivalent to $|a_{0,i}|\leq 1$ for $i=1\cdots,n$.

Let $\lambda$ be the highest weight of $\mathfrak{g}_1$ as a $\mathfrak{g}^{ss}_0$-module, so that $\mathfrak{g}_1\simeq V(\lambda)$. We will use Theorem \ref{th:PS} as our nilpotency criterion: the Koszul module $\mathcal{W}(D,\alpha)$ is nilpotent if and only if $2\lambda^*-\alpha_i$ is not a dominant weight of $\mathfrak{k}^\perp$ for any $i=1,\cdots,n$. Recall that $V(\lambda^*)\simeq\mathfrak{g}_1^*\simeq\mathfrak{g}_{-1}$ and hence $\mathfrak{k}^\perp\subset\bigwedge^2\mathfrak{g}_{-1}$. The lowest weight vector of $\mathfrak{g}_1$ is clearly the standard generator $e_0$, hence the highest weight vector of $\mathfrak{g}_{-1}$ is $f_0$. Since the $\mathfrak{g}$-weight of the vector $f_0$ is $-\alpha_0$, we conclude that $\lambda^*=-\alpha_0\big|_{\mathfrak{h}_0}$, where $\mathfrak{h}_0$ is the Cartan subalgebra of $\mathfrak{g}^{ss}_0$. Thus we have to show that in the generalized Cartan matrix we have $|a_{0,i}|\leq 1$ for all $i=1\cdots,n$ if and only if $-2\alpha_0\big|_{\mathfrak{h}_0}-\alpha_i$ is not a dominant weight of $\mathfrak{k}^\perp$ for any $i=1,\cdots,n$.

Note that the multiplicity of an isomorphic copy of $V(2\lambda^*-\alpha_i)$ inside $\bigwedge^2\mathfrak{g}_{-1}$ is at most one. Indeed, the highest weight vector of this copy must be an element of the at most one-dimensional space $\mathfrak{g}_{-2\alpha_0-\alpha_i}=\mathbb{C}\cdot\left[f_0,[f_0,f_i]\right]$. Furthermore, by Lemma \ref{l:surjective} the map $[\cdot,\cdot]:\bigwedge^2\mathfrak{g}_{-1}\rightarrow\mathfrak{g}_{-2}$ is surjective. It follows that $-2\alpha_0\big|_{\mathfrak{h}_0}-\alpha_i$ is not a dominant weight of $\mathfrak{k}^\perp$ if and only if it is not a dominant weight of $\mathfrak{g}_{-2}$. Since weights of this form will always be dominant, we conclude that $-2\alpha_0\big|_{\mathfrak{h}_0}-\alpha_i$ is not a dominant $\mathfrak{g}^{ss}_0$-weight for $\mathfrak{k}^\perp$ if and only if $-2\alpha_0-\alpha_i$ is a not root of $\mathfrak{g}$. By the symmetry of the root system, the second condition is of course equivalent to $2\alpha_0+\alpha_i$ not being a root of $\mathfrak{g}$.

Let us now fix $i\in\{1,\cdots,n\}$ and put $\delta:=2\alpha_0+\alpha_i$. Let $\mathfrak{g}'\subset\mathfrak{g}$ be the Lie subalgebra generated by the elements $e_0,e_i,f_0,f_i$. Allowing for a slight abuse of notation, we treat $\alpha_0,\  \alpha_1$ and $\delta$ as roots of both $\mathfrak{g}$ and $\mathfrak{g}'$. By Lemma \ref{l:generating} we know that if $\delta$ is a root of $\mathfrak{g}$, then $\mathfrak{g}_\delta\subset\mathfrak{g}'$. Hence $\delta$ is a root of $\mathfrak{g}$ if and only if it is a root of $\mathfrak{g}'$. On the other hand, by Lemma \ref{l:subdiagram} the Lie algebra $\mathfrak{g}'$ is isomorphic to the rank $2$ Kac-Moody Lie algebra $\mathfrak{g}(A)$ for the generalized Cartan matrix $$A=\begin{pmatrix}2&a_{0,i}\\a_{i,0}&2\\\end{pmatrix}
=\begin{pmatrix}2&-a\\-b&2\\\end{pmatrix}$$
Using Lemma \ref{l:connected} to exclude the trivial case $a=b=0$, we have thus reduced our theorem to the following:

\begin{claim}\label{claim}
Let $a,b\in\mathbb{Z}_{>0}$ and let $D$ be the generalized Dynkin diagram

\begin{center}
\begin{tikzpicture}
\node [circle,fill=black,draw,label=below:$\alpha_0$] (1) at (0,0) {};
\node [circle,fill=white,draw,label=below:$\alpha_1$] (2) at (2,0) {};
\draw[edge] (1) to [bend left] node[midway,above]{b} (2);
\draw[edge] (2) to [bend left] node[midway,below]{a} (1);
\end{tikzpicture}
\end{center}
Then the $2\alpha_0+\alpha_1$ is \textnormal{not} a root if and only if $a=1$.
\end{claim}

An instructive example is given by pairs $(a,b)$ for which the generalized Dynkin diagram $D$ is of finite type. In Table \ref{table:finite} we present the corresponding root systems and one immediately concludes that for those Lie algebras $2\alpha_0+\alpha_1$ is a root if and only if $\alpha_0$ is a short root, i.e. $a\neq 1$.

\begin{center}
	\begin{table}[htb]
		\centering
		\[
		\begin{array}{|c|c|c|c|}
		\hline\rule[-0.5mm]{0mm}{4mm}
		\textup{$(a,b)$}
		&
		$(1,1)$
		&
		$(1,2)$
		&
		$(1,3)$
		\\\hline\rule[8mm]{0mm}{5mm}
		
		\textup{\Large{Dynkin diagram}}
		&
		\begin{tikzpicture}
		\node [circle,fill=white,draw,label=below:$\alpha_0$] (1) at (0,0) {};
		\node [circle,fill=white,draw,label=below:$\alpha_1$] (2) at (2,0) {};
		\draw (1) to (2);
		\end{tikzpicture}
		&
		\begin{tikzpicture}
		\node [circle,fill=white,draw,label=below:$\alpha_0$] (1) at (0,0) {};
		\node [circle,fill=white,draw,label=below:$\alpha_1$] (2) at (2,0) {};
		\draw[line width=1pt, double distance=4.5pt, -{Classical TikZ Rightarrow[length=3mm]}] (1) to (2);
		\end{tikzpicture}
		&
		\begin{tikzpicture}
		\node [circle,fill=white,draw,label=below:$\alpha_0$] (1) at (0,0) {};
		\node [circle,fill=white,draw,label=below:$\alpha_1$] (2) at (2,0) {};
		\draw[triple] (1) to (2);
		\end{tikzpicture} 
		\\\hline
		
		\textup{\Large{Type}}
		&
		\textup{$A_2$}
		&
		\textup{$B_2=C_2$}
		&
		\textup{$G_2$}
		\\\hline
		
		\textup{\Large{Lie algebra}}
		&
		\textup{$\mathfrak{sl}(3)$}
		&
		\textup{$\mathfrak{so}(5)=\mathfrak{sp}(4)$}
		&
		\textup{$\mathfrak{g}_2$}
		\\\hline\rule[10mm]{0mm}{5mm}
		
		\textup{\Large{Root system}}	
		&
		\begin{tikzpicture}
		\foreach\ang in {60,120,...,360}{
			\draw[->,black!80!black,thick] (0,0) -- (\ang:1cm);}
		\node[right,scale=1.5] at (1,0) {$\alpha_1$};
		\node[above,scale=1.5] at (-0.5,0.9) {$\alpha_0$};
		\end{tikzpicture}
		&
		\begin{tikzpicture}
		\foreach\ang in {90,180,270,360}{
			\draw[->,black!80!black,thick] (0,0) -- (\ang:1cm);
		}
		\foreach\ang in {45,135,225,315}{
			\draw[->,black!80!black,thick] (0,0) -- (\ang:1.4142cm);
		}
		
		\node[right,scale=1.5] at (1,0) {$\alpha_1$};
		\node[above right,scale=1.5] at (-1.4,1) {$\alpha_0$};
		\end{tikzpicture}
		&
		\begin{tikzpicture}
		\foreach\ang in {60,120,...,360}{
			\draw[->,black!80!black,thick] (0,0) -- (\ang:1cm);	}
		\foreach\ang in {30,90,...,330}{
			\draw[->,black!80!black,thick] (0,0) -- (\ang:1.6cm);	}
		\node[right,scale=1.5] at (1,0) {$\alpha_1$};
		\node[above right,scale=1.5] at (-1.7,0.8) {$\alpha_0$};
		\end{tikzpicture}
		\\\hline
		\end{array}
		\] 
		\caption{Finite-dimensional rank 2 Kac-Moody Lie algebras}\label{table:finite}
	\end{table}
\end{center}

Now we prove Claim \ref{claim} in general. Let $\widetilde{\mathfrak{g}}:=\widetilde{\mathfrak{g}}(D)$, $\mathfrak{g}:=\mathfrak{g}(D)$ and let $\widetilde{\mathfrak{i}}$, resp. $\mathfrak{i}$, denote the ideal generated by the root space $\widetilde{\mathfrak{g}}_{2\alpha_0+\alpha_1}$ inside $\widetilde{\mathfrak{g}}$, resp. by $\mathfrak{g}_{2\alpha_0+\alpha_1}$ inside $\mathfrak{g}$. Recall that $\widetilde{\mathfrak{g}}_{2\alpha_0+\alpha_1}=\mathbb{C}\cdot g$, where $g:=\left[e_0,[e_0,e_1\right]]$. By usual abuse of notation we treat $g$ as an element of both $\widetilde{\mathfrak{g}}$ and $\mathfrak{g}$, and $g\neq0$ inside $\mathfrak{g}$ if and only if $2\alpha_0+\alpha_1$ is a root.

Using the defining relations of $\widetilde{\mathfrak{g}}$ and the Jacobi identity, we compute:
\begin{align*}\label{ideal}
&[f_0,g]=[f_0,[e_0,[e_0,e_1]]]=-[[e_0,e_1],[e_0,f_0]]-[e_0,[f_0,[e_0,e_1]]]=\\
&[\alpha_0^\vee,[e_0,e_1]]+[e_0,[e_0,[e_1,f_0]]]+[e_0,[e_1,[f_0,e_0]]]=\\
&-[e_0,[e_1,\alpha_0^\vee]]-[e_1,[\alpha_0^\vee,e_0]]+[e_0,[\alpha_0^\vee,e_1]]=\\
&2(1-a)\cdot[e_0,e_1]\\
\end{align*}

Assume first that $a\neq1$. In this case the computation above directly shows that $[e_0,e_1]\in\widetilde{\mathfrak{i}}$. Hence $\widetilde{\mathfrak{g}}_{\alpha_1}=\mathbb{C}e_1=\mathbb{C}\cdot[f_0,[e_0,e_1]]\subset\widetilde{\mathfrak{i}}$ and consequently $\alpha_1^\vee=[e_1,f_1]\in\widetilde{\mathfrak{i}}\cap\mathfrak{h}$. Recall the ideal $\mathfrak{t}$ from the definition of $\mathfrak{g}$. It is the unique maximal ideal inside $\widetilde{\mathfrak{g}}$ intersecting $\mathfrak{h}$ trivially and $\mathfrak{g}=\widetilde{\mathfrak{g}}/\mathfrak{t}$. Since $\widetilde{\mathfrak{i}}\cap\mathfrak{h}\neq\{0\}$ and $\widetilde{\mathfrak{g}}_{2\alpha_0+\alpha_1}$ generates $\widetilde{\mathfrak{i}}$, it follows that $\widetilde{\mathfrak{g}}_{2\alpha_0+\alpha_1}\cap\mathfrak{t}=\{0\}$. Thus inside $\mathfrak{g}$ we have $\mathfrak{g}_{2\alpha_0+\alpha_1}\neq\{0\}$.

Now assume that $a=1$. By the computation above $[f_0,g]=0$, while $[f_1,g]=0$ due to the fact that $2\alpha_0$ is not even a root of $\widetilde{\mathfrak{g}}$. Consider the subspace $\mathfrak{s}\subset\mathfrak{g}$ consisting of linear combinations of the elements of the form $[e_{i_1},[e_{i_2},[\cdots,[e_{i_N},g]]\cdots]]$ with $i_k\in\{0,1\}$ and $N\geq 0$. We claim that the subspace $\mathfrak{s}$ is equal to $\mathfrak{i}$. We obviously have $g\in\mathfrak{s}\subset\mathfrak{i}$, so we only need to show that $\mathfrak{s}$ itself is an ideal. The element $g$ is an eigenvector of the action of $\mathfrak{h}$, and so are the elements $[e_{i_1},[e_{i_2},[\cdots,[e_{i_N},g]]\cdots]]$. Hence $[\mathfrak{h},\mathfrak{s}]\subset\mathfrak{s}$. Inclusions $[\mathfrak{g}_{\alpha_0},\mathfrak{s}]\subset\mathfrak{s}$ and $[\mathfrak{g}_{\alpha_1},\mathfrak{s}]\subset\mathfrak{s}$ follow directly from the definition of $\mathfrak{s}$. Finally, combining the Jacobi identity with the fact that $[f_0,g]=[f_1,g]=0$ implies $[\mathfrak{g}_{-\alpha_0},\mathfrak{s}]\subset\mathfrak{s}$ and $[\mathfrak{g}_{-\alpha_1},\mathfrak{s}]\subset\mathfrak{s}$. Since $\mathfrak{h},e_0,e_1,f_0$ and $f_1$ generate $\mathfrak{g}$, $\mathfrak{s}$ is indeed an ideal. However, $\mathfrak{s}\cap\mathfrak{h}=\{0\}$ and consequently $\mathfrak{s}=\{0\}$. Thus $\mathfrak{g}_{2\alpha_0+\alpha_1}=\{0\}$ as well.

\end{proof}	

\begin{remark}
	An even quicker but less direct way to prove Claim \ref{claim} is to instead consider the grading on $\mathfrak{g}$ defined by the root $\alpha_1$. Then we have $\left\langle f_0, \alpha_0^\vee,e_0 \right\rangle=\mathfrak{g}_0^{ss}\simeq\mathfrak{sl}(2)$ and under this identification the root $\alpha_0$ corresponds to the weight $2\omega$, where $\omega$ is the fundamental weight of $\mathfrak{sl}(2)$. Recall that $f_1$ is the highest weight vector of the representation $\mathfrak{g}_{-1}$. The duality of the grading, self-duality of $\mathfrak{sl}(2)$-modules and Fact \ref{fact} all together imply that as representations of $\mathfrak{sl}(2)$ we have $\mathfrak{g}_1\simeq(\mathfrak{g}_{-1})^*\simeq\mathfrak{g}_{-1}\simeq V(a\omega)=\textnormal{Sym}^a(\mathbb{C}^2)$. Weights of the representation $\textnormal{Sym}^a(\mathbb{C}^2)$ form an uninterrupted string $-a\cdot\omega,-(a-2)\cdot\omega,\cdots,(a-2)\cdot\omega,a\cdot\omega$ and the weight $2\alpha_0+\alpha_1$ corresponds to $4\cdot\omega-a\cdot\omega=(4-a)\cdot\omega$. Thus it is a root of $\mathfrak{g}$ if and only if $-a\leq 4-a\leq a$, i.e. when $2\leq a$.
\end{remark}

\begin{remark}
Yet another proof of Claim \ref{claim} can be given using the following direct description of root systems for a certain class of Kac-Moody Lie algebras:

\begin{theorem}[Proposition 5.10 in \cite{Kac}]\label{th:roots}
	Let $A$ be a symmetrizable generalized Cartan matrix of finite, affine or hyperbolic type. Denote by $(\cdot,\cdot)$ the invariant bilinear form on $\mathfrak{h}\simeq\mathfrak{h}^*$ and let $\Delta_+\subset Q_+$ be the set of positive roots of $\mathfrak{g}(A)$.
	
	Then $\Delta_+=\Delta_+^{re}\cup\Delta_+^{im}$, where
	$$\Delta^{re}_+:=\{\delta=\sum a_i\alpha_i\in Q_+: (\delta,\delta)>0\textnormal\quad{ and}\quad a_i\cdot\frac{(\alpha_i,\alpha_i)}{(\delta,\delta)}\in\mathbb{Z}\}$$
	$$\Delta^{im}_+:=\{\delta=\sum a_i\alpha_i\in Q_+: (\delta,\delta)\leq0\}$$
	The elements of $\Delta^{re}_+$ (resp. $\Delta^{im}_+$), are \textnormal{real} (resp. \textnormal{imaginary}) \textnormal{roots} of $\mathfrak{g}(A)$.
\end{theorem}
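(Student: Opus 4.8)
The plan is to prove the two displayed set equalities by splitting the positive roots into \emph{real} roots, i.e. those in the Weyl group orbit $W\cdot\{\alpha_1,\dots,\alpha_n\}$, and \emph{imaginary} roots, i.e. all the rest, and then matching each class with the corresponding combinatorial condition. Throughout I use that symmetrizability furnishes a $W$-invariant form $(\cdot,\cdot)$ with $(\alpha_i,\alpha_i)=2d_i>0$, and that $W$ preserves both the root lattice $Q$ and the coroot lattice $Q^\vee=\bigoplus_i\mathbb{Z}\alpha_i^\vee$. Under the identification $\alpha_i^\vee=2\alpha_i/(\alpha_i,\alpha_i)$ coming from $\mathfrak{h}\simeq\mathfrak{h}^*$, the divisibility condition $a_i(\alpha_i,\alpha_i)/(\delta,\delta)\in\mathbb{Z}$ is exactly the statement that the rescaled vector $\delta^\vee:=2\delta/(\delta,\delta)$ lies in $Q^\vee$. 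I would then treat the four implications separately (real root $\Rightarrow$ conditions; conditions $\Rightarrow$ real root; imaginary root $\Rightarrow(\delta,\delta)\le0$; $(\delta,\delta)\le0\Rightarrow$ imaginary root), the finite/affine/hyperbolic hypothesis being needed only for the last.

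For the real roots, the forward direction is immediate: if $\delta=w(\alpha_i)$ then $W$-invariance gives $(\delta,\delta)=(\alpha_i,\alpha_i)=2d_i>0$, while $\delta^\vee=w(\alpha_i^\vee)\in Q^\vee$. For the converse, suppose $\delta\in Q_+$ with $(\delta,\delta)>0$ and $\delta^\vee\in Q^\vee$. The idea is the standard Weyl-chamber reduction: applying simple reflections one conjugates $\delta$ toward the dominant chamber, and the structural dichotomy of \cite{Kac} shows that any $\beta\in Q_+\setminus\{0\}$ of connected support (Lemma \ref{l:connected}) is $W$-conjugate either into the fundamental set $K=\{\gamma\in Q_+\setminus\{0\}:\langle\gamma,\alpha_j^\vee\rangle\le0\ \forall j,\ \mathrm{supp}(\gamma)\text{ connected}\}$ or to a positive multiple $c\alpha_i$ of a simple root. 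Since every $\gamma\in K$ is anti-dominant, the computation $(\gamma,\gamma)=\sum_j c_j d_j\langle\gamma,\alpha_j^\vee\rangle\le0$ rules out the first alternative once $(\delta,\delta)>0$. Hence $\delta$ is conjugate to $c\alpha_i$, so $\delta^\vee$ is conjugate to $\alpha_i^\vee/c$; as $\alpha_i^\vee$ is primitive in $Q^\vee$, the hypothesis $\delta^\vee\in Q^\vee$ forces $c=1$, and $\delta\in W\cdot\{\alpha_i\}$ is a real root.

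For the imaginary roots, the forward direction again uses Kac's description $\Delta_+^{im}=\bigcup_{w\in W}w(K)$ \cite{Kac}: a positive imaginary root is conjugate to some $\gamma\in K$, and the anti-dominance of $\gamma$ together with $W$-invariance gives $(\delta,\delta)=(\gamma,\gamma)\le0$ by the same inequality. The substantive converse is to show that \emph{every} $\delta\in Q_+\setminus\{0\}$ with $(\delta,\delta)\le0$ is a root. Since $K\subseteq\Delta_+^{im}$ always holds \cite{Kac}, it suffices to prove such a $\delta$ is $W$-conjugate into $K$. In finite type the form is positive definite and the claim is vacuous; in affine type the form is positive semidefinite with one-dimensional radical spanned by the null root $\delta_0$, so $(\delta,\delta)\le0$ forces $\delta\in\mathbb{Z}_{>0}\delta_0$, and these multiples lie in $K$ and are precisely the imaginary roots.

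The main obstacle is the hyperbolic case. Here the form has Lorentzian signature $(n-1,1)$, the region $\{(\delta,\delta)\le0\}$ is a genuine light cone, and the claim becomes that every lattice point of $Q_+$ in the forward cone is $W$-conjugate into $K$; equivalently, that the imaginary cone $\bigcup_w w(K)$ fills the entire forward light cone. This \emph{fails} for general indefinite type, and the hyperbolicity hypothesis that every proper subdiagram is of finite or affine type is exactly what rescues it: it forces the dominant chamber $\overline{C}$ to be a finite-covolume fundamental domain for the action of $W$ on the hyperbolic space obtained by projectivizing the forward cone, so that the $W$-translates of $\overline{C}$ tile that cone. I would carry this out via the Tits-cone reduction, moving $\delta$ into $\overline{C}$ by simple reflections while controlling that one stays within $Q_+$ (using connected supports, Lemma \ref{l:connected}), and then verifying that a dominant representative with $(\delta,\delta)\le0$ automatically satisfies $\langle\delta,\alpha_j^\vee\rangle\le0$ with connected support, i.e. lands in $K$. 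The delicate point throughout is the interplay between the metric condition $(\delta,\delta)\le0$ and the combinatorial requirement of remaining in $Q_+$ under reflection, which is why the hyperbolic geometry of $W$, rather than a naive height minimization, is the correct tool.
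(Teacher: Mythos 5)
For context: the paper itself does not prove this statement at all --- it is quoted verbatim as Proposition 5.10 of \cite{Kac} and used as a black box in a remark giving a second proof of Claim \ref{claim} --- so your proposal has to stand on its own. It does not, because the ``structural dichotomy'' on which your real-root converse rests is false. It is not true that every $\beta\in Q_+\setminus\{0\}$ with connected support is $W$-conjugate either into the fundamental set $K$ or to a positive multiple $c\alpha_i$ of a simple root: in type $A_2$ (normalized so that $(\alpha_i,\alpha_i)=2$) take $\beta=\alpha_1+2\alpha_2$; here $K=\emptyset$ since the form is positive definite, while $(\beta,\beta)=6$ and every $W$-translate of $c\alpha_i$ has norm $2c^2\neq 6$. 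What is true is this dichotomy for \emph{roots} (the definition of real roots together with Kac's theorem $\Delta_+^{im}=\bigcup_{w\in W}w(K)$, which you quote). But in your converse step $\delta$ is only known to be a lattice point with $(\delta,\delta)>0$ and $\delta^\vee\in Q^\vee$; whether $\delta$ is a root is exactly what must be proven, so applying the dichotomy to $\delta$ begs the question. Since this inclusion is the substantive half of the description of $\Delta_+^{re}$, and nothing else in the proposal argues for it (the primitivity observation only rules out $c\geq 2$ \emph{after} one already knows $\delta\in W\cdot c\alpha_i$), this is a genuine gap.

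The imaginary half is in better shape --- the forward direction and the finite and affine cases of the converse are correct --- but the hyperbolic case is only a sketch, and the one step you defer, ``controlling that one stays within $Q_+$,'' is the entire content; Lemma \ref{l:connected} cannot help there, since it concerns supports of roots and $\delta$ is again a mere lattice point. The mechanism that makes the descent work is: take $\beta$ of minimal height in $W\delta\cap Q_+$ and suppose $\langle\beta,\alpha_i^\vee\rangle>0$ for some $i$; if $r_i\beta\notin Q_+$, write $r_i\beta=\gamma_+-\gamma_-$ with $\gamma_-$ a positive multiple of $\alpha_i$ and $\gamma_+\in Q_+$ supported on $D\setminus\{\alpha_i\}$; since $(\gamma_+,\gamma_-)\le 0$ and $(\gamma_-,\gamma_-)>0$, one gets $(\gamma_+,\gamma_+)<(r_i\beta,r_i\beta)=(\delta,\delta)\le 0$, contradicting positive semidefiniteness of the form on every proper subdiagram --- which is precisely what the finite/affine/hyperbolic hypothesis supplies. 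Hence $r_i\beta\in Q_+$, contradicting minimality, so $\beta$ is anti-dominant and (after checking connectedness of its support, which again follows from semidefiniteness of proper subdiagrams) lies in $K$. Note also a sign slip in your sketch: elements of $Q_+$ inside the light cone are conjugated into the \emph{anti-}dominant chamber, and a ``dominant representative'' satisfying $\langle\delta,\alpha_j^\vee\rangle\le 0$ would be orthogonal to every coroot. The finite-covolume/tiling picture you invoke is a true feature of hyperbolic type, but as used it simply assumes the covering statement one needs to prove, and in any case it says nothing about the real-root half, which in \cite{Kac} requires its own, more delicate, argument using the divisibility hypothesis.
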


Every rank $2$ generalized Cartan matrix is of finite, affine or hyperbolic type. Indeed, the only subdiagrams of $D$ correspond to the finite root system $A_1$. Furthermore, clearly such matrix is symmetrizable. Thus Theorem \ref{th:roots} holds for $\mathfrak{g}(D)$. An invariant bilinear form $(\cdot,\cdot)$ on $\mathfrak{h}\simeq\mathfrak{h}^*$ is given by the symmetrization of the generalized Cartan matrix $A$. Thus in the situation at hand it can be written as:
$$
\begin{pmatrix}
2b&-ab\\
-ab&2a\\
\end{pmatrix}
$$
\noindent Consider an element $\delta=m\alpha_0+n\alpha_1$ for some $m,n\in\mathbb{Z}_{\geq0}$. The norm of $\delta$ with respect to $(\cdot,\cdot)$ is then given by $(\delta,\delta)=2bm^2-2abmn+2an^2$. The root we are interested in is $\delta:=2\alpha_0+\alpha_1$ and hence $(\delta,\delta)=8b+2a-4ab$.

If $a=1$, this gives $(\delta,\delta)=4b+2>0$. Thus $\delta\not\in\Delta_+^{im}$. To see that $\delta\not\in\Delta_+^{re}$, it is enough to note that $2\cdot\tfrac{(\alpha_0,\alpha_0)}{(\delta,\delta)}=2\cdot\tfrac{2b}{4b+2}=1-\frac{1}{2b+1}\not\in\mathbb{Z}$. Thus $\delta\not\in\Delta$.

For $a=2$, we have $(\delta,\delta)=4$ and since $(\alpha_0,\alpha_0)=2b$, $(\alpha_1,\alpha_1)=4$, we conclude that $\delta\in\Delta^{re}_+$. For $a=3$ we get $(\delta,\delta)=6-4b$ and so $\delta\in\Delta^{im}_+$ unless $b=1$. But for $b=1$ we get $(\delta,\delta)=2$ and hence $\delta\in\Delta^{re}_+$. Now if $a\geq 4$, $(\delta,\delta)=8b+2a-4ab\leq 8b+2ab-4ab=2b(4-a)\leq 0$. Thus in this case $\delta$ is always an imaginary root.
\end{remark}

Recall that in the previous section we have presented a construction of an admissible pair $(D,\alpha)$ with prescribed $\mathfrak{g}_0^{ss}$ for which the representation $\mathfrak{g}_1$ could be made isomorphic to any irreducible, finite-dimensional $\mathfrak{g}_0^{ss}$-module. The generalized Dynkin diagram $D$ was associated with the generalized Cartan matrix $A=(a_{i,j})_{i,j=0,\cdots,n}$ with all entries uniquely specified, except for $a_{0,i}\in\mathbb{Z}_{<0}$ for $1\leq i\leq n$ such that $a_{i,0}\neq 0$. Theorem \ref{th:1} shows that for the module $\mathcal{W}(D,\alpha)$ to be nilpotent we must have $a_{0,i}=-1$ for all unspecified indices. Thus we obtain the following:

\begin{corollary}
	Let $\mathfrak{k}$ be a finite-dimensional, semi-simple Lie algebra, let $\omega_1,\cdots,\omega_n$ be its fundamental weights and let $V=V(\sum a_i\omega_i)$ be an irreducible representation of $\mathfrak{k}$. Then there exists \textnormal{unique} admissible pair $(D,\alpha)$ such that for the associated graded Kac-Moody Lie algebra $\mathfrak{g}=\mathfrak{g}(D)$ we have $\mathfrak{g}_0^{ss}\simeq\mathfrak{k}$, $\mathfrak{g}_1\simeq V$ and the Koszul module $\mathcal{W}(D,\alpha)$ is nilpotent.
\end{corollary}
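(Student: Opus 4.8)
For the final statement I would split the proof into an \emph{existence} half, which simply records the construction already given in Section~3 and checks its three properties, and a \emph{uniqueness} half, which is a forcing argument on the entries of the generalized Cartan matrix. The whole argument reduces to bookkeeping once Fact~\ref{fact} and Theorem~\ref{th:1} are in hand.

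For existence, I would reuse the explicit construction preceding Fact~\ref{fact}: starting from the Cartan matrix $A'=(a_{i,j})_{i,j=1,\dots,n}$ of $\mathfrak{k}$, enlarge it to $A=(a_{i,j})_{i,j=0,\dots,n}$ by setting $a_{0,0}=2$, $a_{i,0}=-a_i$ for $i\geq1$, and $a_{0,i}=-1$ whenever $a_i\neq0$ while $a_{0,i}=0$ when $a_i=0$. First I would verify that $A$ is a legitimate generalized Cartan matrix: the diagonal is $2$, the off-diagonal entries are nonpositive integers, and the zero-symmetry axiom holds because $a_{i,0}=0$ and $a_{0,i}=0$ are both equivalent to $a_i=0$. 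Writing $D$ for its diagram and $\alpha=\alpha_0$, admissibility is immediate since $D\setminus\{\alpha_0\}$ is the finite-type diagram of $\mathfrak{k}$. Then $\mathfrak{g}_0^{ss}=\mathfrak{g}(D\setminus\{\alpha_0\})\simeq\mathfrak{k}$, and Fact~\ref{fact} gives $\mathfrak{g}_1\simeq V(-\sum a_{i,0}\omega_i)=V(\sum a_i\omega_i)=V$. Finally every arrow directed towards $\alpha_0$ has multiplicity $-a_{0,i}=1$, so $m=1$ and Theorem~\ref{th:1} yields nilpotency of $\mathcal{W}(D,\alpha_0)$.

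For uniqueness, suppose $(D,\alpha)$ is any admissible pair with the three properties, with generalized Cartan matrix $B=(b_{i,j})$; relabel so that $\alpha=\alpha_0$. I would show that each block of $B$ is forced. The isomorphism $\mathfrak{g}_0^{ss}\simeq\mathfrak{k}$ identifies $D\setminus\{\alpha_0\}$ with the diagram of $\mathfrak{k}$, and transporting the labeling of $\mathfrak{k}$'s simple roots (the same labeling that indexes the $\omega_i$) fixes the submatrix $(b_{i,j})_{i,j\geq1}=A'$. Next, Fact~\ref{fact} gives $\mathfrak{g}_1\simeq V(-\sum b_{i,0}\omega_i)$; comparing with $\mathfrak{g}_1\simeq V=V(\sum a_i\omega_i)$ and using uniqueness of the highest weight forces $b_{i,0}=-a_i$ for every $i$. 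The remaining entries $b_{0,i}$ are then pinned down: for $i$ with $a_i=0$ we have $b_{i,0}=0$, so zero-symmetry forces $b_{0,i}=0$; for $i$ with $a_i\neq0$ we have $b_{i,0}\neq0$, hence $b_{0,i}\neq0$ and $-b_{0,i}\geq1$, while nilpotency together with Theorem~\ref{th:1} gives $m=\max_i(-b_{0,i})=1$, whence $-b_{0,i}\leq1$ and therefore $b_{0,i}=-1$. Thus $B=A$ and $(D,\alpha)$ coincides with the pair constructed above.

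The crux — and the step I expect to be the main obstacle — is the last part of the uniqueness argument, namely forcing the row entries $b_{0,i}$ to equal $-1$. These are precisely the entries that $\mathfrak{g}_0^{ss}\simeq\mathfrak{k}$ and $\mathfrak{g}_1\simeq V$ leave undetermined, so uniqueness hinges entirely on feeding the nilpotency hypothesis through Theorem~\ref{th:1}. The delicate point is that Theorem~\ref{th:1} only bounds the multiplicities from \emph{above} ($\max_i(-b_{0,i})=1$); one must supply the matching lower bound $-b_{0,i}\geq1$ from the sign and zero-symmetry axioms of a generalized Cartan matrix in order to deduce equality. I would also dispatch the degenerate case where $V$ is the trivial representation: then every $a_i=0$, the node $\alpha_0$ is isolated, $\bigwedge^2\mathfrak{g}_1=0$ so $\mathcal{W}(D,\alpha_0)=0$ is trivially nilpotent, and the whole matrix $B$ is forced by the zero-symmetry axiom alone, so existence and uniqueness persist.
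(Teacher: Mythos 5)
Your proposal is correct and takes essentially the same route as the paper: existence via the construction preceding Fact~\ref{fact} with the unspecified entries set to $a_{0,i}=-1$, and uniqueness by observing that $\mathfrak{g}_0^{ss}\simeq\mathfrak{k}$ and $\mathfrak{g}_1\simeq V$ force every entry of the generalized Cartan matrix except the row entries $a_{0,i}$, which Theorem~\ref{th:1} (together with the sign and zero-symmetry axioms) then pins to $-1$. Your explicit handling of the trivial representation, where the node $\alpha$ is isolated and nilpotency holds vacuously, is a small but genuine refinement, since the paper's statement of Theorem~\ref{th:1} does not literally cover that degenerate case.
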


Using Theorem \ref{th:1} we also get a non-trivial bound on the dimension of $\mathfrak{g}_2$ in terms of $\dim\mathfrak{g}_1$. Note that by Lemma \ref{l:generating} the trivial bound is $\dim\mathfrak{g}_2\leq\binom{\dim\mathfrak{g}_1}{2}$ and the equality is achieved quite often, e.g. for all symmetric rank $2$ Kac-Moody Lie algebras other than $\mathfrak{sl}(2)$.

\begin{corollary}
Let $D$ be a generalized Dynkin diagram with a node $\alpha$ such that no multiple arrow points towards $\alpha$. Then for the associated grading on the Lie algebra $\mathfrak{g}=\mathfrak{g}(D)$ we have an inequality: $$\dim\mathfrak{g}_2\leq\binom{\dim\mathfrak{g}_1}{2}-(2\dim\mathfrak{g}_1-3)$$
\end{corollary}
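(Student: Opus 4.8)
The plan is to reduce the asserted inequality to a single lower bound on $\dim\mathfrak{k}$, where $\mathfrak{k}:=\ker\left([\cdot,\cdot]:\bigwedge^2\mathfrak{g}_1\to\mathfrak{g}_2\right)$, and then to extract that bound from the nilpotency of $\mathcal{W}(D,\alpha)$ together with the minimal-dimension count of Papadima and Suciu. First I would set $n:=\dim\mathfrak{g}_1$ and invoke Lemma \ref{l:surjective}, which guarantees that the bracket $[\cdot,\cdot]:\bigwedge^2\mathfrak{g}_1\to\mathfrak{g}_2$ is surjective. Since $\mathfrak{k}$ is precisely its kernel, rank–nullity yields
$$\dim\mathfrak{g}_2=\binom{n}{2}-\dim\mathfrak{k}.$$
Hence the claimed inequality $\dim\mathfrak{g}_2\leq\binom{n}{2}-(2n-3)$ is equivalent to the single estimate $\dim\mathfrak{k}\geq 2n-3$.

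Next I would translate the hypothesis on $D$ into a statement about resonance. The condition that no multiple arrow points towards the node $\alpha$ says exactly that the maximal multiplicity $m$ of an arrow directed towards $\alpha$ equals $1$. By Theorem \ref{th:1} this is equivalent to the nilpotency of the Koszul module $\mathcal{W}(D,\alpha)=\mathcal{W}(\mathfrak{g}_1,\mathfrak{k})$, and by Theorem \ref{th:PS} (equivalently, by the identification of the support of a Koszul module with its resonance variety recalled in Section 1) nilpotency amounts to $\mathcal{R}(\mathfrak{g}_1,\mathfrak{k})=\{0\}$.

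Finally I would apply the dimension count of Papadima and Suciu recalled in Section 1: the locus of subspaces $K\subset\bigwedge^2\mathfrak{g}_1$ of a fixed dimension $m$ with $\mathcal{R}(\mathfrak{g}_1,K)=\{0\}$ is non-empty only when $m\geq 2n-3$. Since $\mathfrak{k}$ is itself such a subspace, its dimension lies in this admissible range, so $\dim\mathfrak{k}\geq 2n-3$, which is exactly the estimate needed. Combining this with the rank–nullity identity above gives the corollary.

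I do not anticipate a genuine obstacle, as every ingredient is already in place; the only point requiring care is the bookkeeping that converts the nilpotency hypothesis (through the surjectivity of the bracket) into a \emph{lower} bound on $\dim\mathfrak{k}$ rather than attempting an upper bound on $\dim\mathfrak{g}_2$ directly. One should also note the trivial edge cases $\dim\mathfrak{g}_1\leq 1$, where $\bigwedge^2\mathfrak{g}_1$ and $\mathfrak{g}_2$ vanish and the inequality holds vacuously.
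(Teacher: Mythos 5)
Your argument is correct and follows essentially the same route as the paper's own proof: Theorem \ref{th:1} gives nilpotency of $\mathcal{W}(D,\alpha)$, the Papadima--Suciu bound $\dim K\geq 2\dim V-3$ for nilpotent Koszul modules yields $\dim\mathfrak{k}\geq 2n-3$, and surjectivity of the bracket (Lemma \ref{l:surjective}) turns this via rank--nullity into the stated inequality. The one detail the paper adds, and which you should too, is to dispose first of the case where $(D,\alpha)$ is \emph{not} admissible (there is nothing to prove there), since Theorem \ref{th:1} only applies to admissible pairs while the corollary's hypotheses do not assume admissibility.
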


\begin{proof}
If the pair $(D,\alpha)$ is not admissible, there is nothing to prove. Assume that $(D,\alpha)$ is an admissible pair. By Theorem \ref{th:1} we know that the Koszul module $\mathcal{W}(D,\alpha)$ is nilpotent. By Proposition 2.7 from \cite{Papadima-Suciu} a Koszul module $\mathcal{W}(V,K)$ can be nilpotent only when $\dim K\geq 2\dim V-3$. In the definition of $\mathcal{W}(D,\alpha)$ we take $V=\mathfrak{g}_1$ and $K=\ker\left([\cdot,\cdot]\right)$. Since the map $[\cdot,\cdot]:\bigwedge^2\mathfrak{g}_1\rightarrow\mathfrak{g}_2$ is surjective, $\dim K=\binom{\dim\mathfrak{g}_1}{2}-\dim\mathfrak{g}_2$ and the claim follows.
\end{proof}	

\section{Maximal nilpotent modules}

In this section we provide a precise description of \textit{nilpotent} Koszul modules $\mathcal{W}(D,\alpha)$ for all admissible pairs $(D,\alpha)$. It turns out that they can be characterized purely in terms of the defining representation $\mathfrak{g}_1$. Unlike results in the previous section whose proofs were given using only elementary methods, the present section uses homology of Lie algebras, the Kostant's formula and the explicit description of the graded pieces $\mathfrak{g}_{-j}$ provided by \cite{Kang}.

Let $V=V(\lambda)$, $\dim V<\infty$, be an irreducible representation of a finite-dimensional semi-simple Lie algebra $\mathfrak{g}$. Theorem \ref{th:PS} implies that there exists a unique $\mathfrak{g}$-invariant subspace $K_{max}\subset \bigwedge^2V$ such that the Koszul module $\mathcal{W}(V,K_{max})$ is \textit{maximal nilpotent}, i.e. for any $\mathfrak{g}$-invariant subspace $K\subset \bigwedge^2V$ such that $\mathcal{W}(V,K)$ is nilpotent there exists a natural surjective $\mathfrak{g}$-equivariant map $\mathcal{W}(V,K_{max})\twoheadrightarrow\mathcal{W}(V,K)$.

Indeed, since \textit{smaller} subspaces correspond to \textit{larger} Koszul module, it is enough to find the smallest subspace satisfying any of the conditions of Theorem \ref{th:PS}. That such a subspace exists and is unique is a direct consequence of the third condition: it is enough to assure that the \textit{lowest} weights of $K_{\max}$ are precisely $(2\lambda^*-\beta)^*$ for simple roots $\beta$ such that $(\lambda,\beta)\neq 0$.

We will denote the unique maximal nilpotent Koszul module on $V$ by by $\mathcal{W}_{max}(V)$.

\begin{theorem}\label{th:max}
Let $(D,\alpha)$ be an admissible pair. If the Koszul module $\mathcal{W}(D,\alpha)$ is nilpotent, then it is maximal nilpotent: $\mathcal{W}(D,\alpha)=\mathcal{W}_{max}(\mathfrak{g}_1)$.
\end{theorem}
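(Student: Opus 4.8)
The plan is to show that when $\mathcal{W}(D,\alpha)$ is nilpotent, the kernel $\mathfrak{k}=\ker([\cdot,\cdot]:\bigwedge^2\mathfrak{g}_1\to\mathfrak{g}_2)$ coincides with the maximal subspace $K_{max}\subset\bigwedge^2\mathfrak{g}_1$ described just before the theorem. Recall that $K_{max}$ is characterized by the requirement that the lowest weights of $K_{max}^\perp$ (equivalently, the highest weights of the dual picture inside $\bigwedge^2\mathfrak{g}_{-1}$) are exactly the weights $2\lambda^*-\beta$ for simple roots $\beta$ with $(\lambda,\beta)\neq 0$, and \emph{only} those. So the goal reduces to proving that $\mathfrak{k}^\perp$ has precisely these highest weights and no others. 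Since $\mathcal{W}(D,\alpha)=\mathcal{W}(\mathfrak{g}_1,\mathfrak{k})$ and $\mathcal{W}_{max}(\mathfrak{g}_1)=\mathcal{W}(\mathfrak{g}_1,K_{max})$, the functoriality noted in Section~1 (smaller $K$ gives a surjection of Koszul modules) means it suffices to show $\mathfrak{k}\subseteq K_{max}$, i.e. that $\mathfrak{k}$ is no larger than the minimal nilpotent-inducing subspace; combined with the fact that $\mathfrak{k}$ itself induces a nilpotent module, this forces equality.

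First I would translate everything into the $\bigwedge^2\mathfrak{g}_{-1}$ picture, exactly as in the proof of Theorem~\ref{th:1}: $\mathfrak{g}_1\simeq V(\lambda)$ with $\lambda^*=-\alpha_0|_{\mathfrak{h}_0}$, and $\mathfrak{k}^\perp\subset\bigwedge^2\mathfrak{g}_{-1}$. By Lemma~\ref{l:surjective} the bracket $\bigwedge^2\mathfrak{g}_{-1}\to\mathfrak{g}_{-2}$ is surjective, so as a $\mathfrak{g}_0^{ss}$-module $\mathfrak{k}^\perp\simeq(\bigwedge^2\mathfrak{g}_{-1})/\mathfrak{g}_{-2}$ — it is the cokernel of the inclusion of $\mathfrak{g}_{-2}$. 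Thus the highest weights of $\mathfrak{k}^\perp$ are precisely those irreducible constituents of $\bigwedge^2\mathfrak{g}_{-1}$ that fail to appear in $\mathfrak{g}_{-2}$. The heart of the argument is to identify these. The key computation, already visible in Theorem~\ref{th:1}, is that the only candidate highest weights of $\bigwedge^2\mathfrak{g}_{-1}$ of the shape $2\lambda^*-\beta$ occur with multiplicity at most one (their highest weight vector must live in the at-most-one-dimensional space $\mathfrak{g}_{-2\alpha_0-\beta}$), and that such a weight survives into $\mathfrak{k}^\perp$ exactly when $2\alpha_0+\beta$ is \emph{not} a root. Under the nilpotency hypothesis $m=1$, Claim~\ref{claim} tells us this happens precisely for the simple roots $\beta=\alpha_i$ with $a_{0,i}\neq 0$, i.e. with $(\lambda,\alpha_i)\neq 0$.

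The main obstacle — and where I would invest the most care — is showing that $\mathfrak{k}^\perp$ contains \emph{no other} highest weights beyond the $2\lambda^*-\beta$. Equivalently, every irreducible constituent of $\bigwedge^2\mathfrak{g}_{-1}$ other than the designated ones must already appear inside $\mathfrak{g}_{-2}$. This is exactly the point where the elementary methods of Section~4 no longer suffice and one must invoke the machinery announced at the start of Section~5: the homology of Lie algebras, Kostant's formula, and the explicit decomposition of $\mathfrak{g}_{-j}$ from \cite{Kang}. The strategy is to use Kostant's theorem, which expresses the Lie algebra homology $H_\bullet(\mathfrak{n}_-;\mathbb{C})$ — and more relevantly the cokernel of the bracket map computing $H_2(\mathfrak{n}_-)$ restricted to the relevant graded piece — as an explicit sum of $\mathfrak{g}_0^{ss}$-irreducibles indexed by Weyl group elements of length two. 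Concretely, the cokernel of $\bigwedge^2\mathfrak{g}_{-1}\to\mathfrak{g}_{-2}$ is a second-syzygy type object whose constituents Kostant's formula pins down to be exactly the weights $w\cdot 0$ for appropriate $w$, and one verifies these are precisely the $(2\lambda^*-\beta)^*$ demanded by the $K_{max}$ characterization.

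Finally, having matched highest weights on both sides, I would conclude as follows. The characterization of $K_{max}$ says $K_{max}^\perp$ has lowest weights exactly $(2\lambda^*-\beta)^*$ for $(\lambda,\beta)\neq 0$; the computation above shows $\mathfrak{k}^\perp$ has these same highest/lowest weights and no others. Since both $\mathfrak{k}$ and $K_{max}$ are $\mathfrak{g}_0^{ss}$-invariant subspaces of $\bigwedge^2\mathfrak{g}_1$ with equal (perpendicular) weight data, and since $K_{max}$ is by construction the \emph{smallest} invariant subspace yielding a nilpotent module, the surjection $\mathcal{W}_{max}(\mathfrak{g}_1)=\mathcal{W}(\mathfrak{g}_1,K_{max})\twoheadrightarrow\mathcal{W}(\mathfrak{g}_1,\mathfrak{k})=\mathcal{W}(D,\alpha)$ must be an isomorphism, giving $\mathfrak{k}=K_{max}$ and hence $\mathcal{W}(D,\alpha)=\mathcal{W}_{max}(\mathfrak{g}_1)$ as claimed.
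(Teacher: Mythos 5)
Your overall strategy is the same as the paper's: reduce the claim to showing that the kernel of the bracket $\bigwedge^2\mathfrak{g}_1\to\mathfrak{g}_2$ consists of exactly the copies of the irreducibles with extreme weight of the form $2\lambda-\beta$, pass to the dual picture $\bigwedge^2\mathfrak{g}_{-1}\to\mathfrak{g}_{-2}$, identify the part killed by the bracket with the graded piece $H_2(\mathfrak{g}_-)_{-2}$ via Kang's result, and compute that piece with Kostant's formula. However, two points need repair. First, your identification of $\mathfrak{k}^\perp$ is inverted: with the paper's convention $K^\perp=\{a:a|_K\equiv0\}$ one has $\mathfrak{k}^\perp\cong\bigl(\bigwedge^2\mathfrak{g}_1/\mathfrak{k}\bigr)^*\cong\mathfrak{g}_2^*\cong\mathfrak{g}_{-2}$, i.e.\ $\mathfrak{k}^\perp$ sits on the \emph{image} side of the bracket, and is not the cokernel $\bigwedge^2\mathfrak{g}_{-1}/\mathfrak{g}_{-2}$. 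Taken literally, your stated goal --- that $\mathfrak{k}^\perp$ has precisely the highest weights $2\lambda^*-\beta$ --- contradicts Theorem \ref{th:PS}, which says nilpotency is equivalent to $2\lambda^*-\beta$ \emph{not} being a dominant weight of $\mathfrak{k}^\perp$. The object you actually need to control is the kernel of the bracket, equivalently the quotient $\bigwedge^2\mathfrak{g}_{-1}/\mathfrak{g}_{-2}\cong H_2(\mathfrak{g}_-)_{-2}$, which is dual to $\mathfrak{k}$ itself; once the labels are corrected, your reduction does coincide with the paper's.

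Second, the step you defer with ``one verifies'' is the entire content of the proof: you must show that every $w\in W_0$ with $l(w)=2$ whose module $V(w.\rho-\rho)$ lands in the graded piece $H_2(\mathfrak{g}_-)_{-2}$ contributes a weight of the form $-(2\alpha+\alpha_j)$. The paper does this directly: writing $w=r_ir_j$ and $r_i.\alpha_j=\alpha_j+n\alpha_i$, one computes $w.\rho-\rho=-\bigl((n+1)\alpha_i+\alpha_j\bigr)$, and the requirement that this weight lie in degree $-2$ with respect to $\alpha$ forces $\alpha_i=\alpha$ and $n+1=2$. Without this computation (or some substitute for it) the assertion that no constituents of $\bigwedge^2\mathfrak{g}_{-1}$ other than the designated ones are killed by the bracket remains unproved, and that assertion is precisely what separates ``nilpotent'' from ``maximal nilpotent.''
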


As already mentioned, the proof of this theorem is a direct application of results from \cite{Kang}. This paper provides a formula for the characters of the graded pieces $\mathfrak{g}_{-j}$ of a graded Kac-Moody Lie algebra $\mathfrak{g}$ in terms of the homology of the Lie subalgebra $\mathfrak{g}_-:=\bigoplus_{i=0}^{+\infty}\mathfrak{g}_{-i}$.

Let $\mathbb{C}$ be the trivial $\mathfrak{g}_0$-module. The homology modules $H_i(\mathfrak{g}_-)$ are defined to be the homology groups of the complex of $\mathfrak{g}_0$-modules:
$$
\cdots\xrightarrow{d_{k+1}}\bigwedge^k\mathfrak{g_-}\xrightarrow{d_k}\bigwedge^{k-1}\mathfrak{g_-}\xrightarrow{d_{k-1}}\cdots\xrightarrow{d_3}\bigwedge^2\mathfrak{g_-}\xrightarrow{d_2}\bigwedge^1\mathfrak{g_-}\xrightarrow{d_1}\bigwedge^0\mathfrak{g}_-\xrightarrow{d_0}\mathbb{C}\rightarrow 0
$$
with the differentials given by
$$d_k(g_1\wedge\cdots\wedge g_k):=\displaystyle\sum_{1\leq i<j\leq k}(-1)^{i+j}[g_i,g_j]\wedge g_1\wedge\cdots\wedge\hat{g_i}\wedge\cdots\wedge\hat{g_j}\wedge\cdots\wedge g_k$$
\noindent These homology groups can be computed using the \emph{Kostant's formula}. To state this formula we must introduce a couple of definitions.

The \textit{Weyl group} of $\mathfrak{g}$ is the subgroup $W\subset\operatorname{Aut}(\mathfrak{h}^*)$ generated by \textit{simple reflections}: $r_i.f:=f-f(\alpha_i^\vee)\alpha_i$ for all simple positive roots $\alpha_i$. Note that $r_i^2=\operatorname{id}$. The \textit{length} of an element $w\in W$ is the length of a minimal representation of $w$ using the generators $r_i$ and we denote it by $l(w)$. The lengths are bounded (equivalently the Weyl group is finite) if and only if $\dim(\mathfrak{g})<+\infty$. Let $\Delta^+$ be the positive roots of $\mathfrak{g}$ and let $\Delta^+_0\subset\Delta^+$ be the positive roots of $\mathfrak{g}_0$. For $w\in W$ let $\Phi_w:=\{\alpha\in\Delta^+: w^{-1}.\alpha<0\}$ and let $W_0:=\{w\in W:\Phi_w\subset\Delta^+\setminus\Delta^+_0\}$. Finally let $\rho\in\mathfrak{h}^*$ be the \emph{Weyl vector} defined by $\rho(\alpha_i^\vee)=1$ for all $i$.

\begin{theorem}[Kostant's formula, \cite{GL}, \cite{Liu}] We have the following decomposition into irreducibles $\mathfrak{g}_0$-modules:
$$
H_k(\mathfrak{g_-})\simeq\displaystyle\bigoplus_{w\in W_0, l(w)=k}V(w.\rho-\rho)
$$
\end{theorem}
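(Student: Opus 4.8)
The plan is to deduce Kostant's formula from the (parabolic) Bernstein--Gelfand--Gelfand resolution of the trivial module, reducing the homological computation to an application of Schur's lemma; the analytic route through Kostant's Laplacian, while cleanest in finite type, does not adapt to the infinite-dimensional setting, so the algebraic resolution is the appropriate tool. Write $\mathfrak{g}=\mathfrak{g}_-\oplus\mathfrak{g}_0\oplus\mathfrak{g}_+$ for the triangular decomposition attached to the grading, where $\mathfrak{g}_+=\bigoplus_{i>0}\mathfrak{g}_i$ and $\mathfrak{g}_-=\bigoplus_{i>0}\mathfrak{g}_{-i}$ are the two nilradicals, and let $\mathfrak{p}:=\mathfrak{g}_0\oplus\mathfrak{g}_+$ be the associated parabolic. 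Throughout I identify Lie algebra homology with trivial coefficients with a $\operatorname{Tor}$: one has $H_k(\mathfrak{g}_-)=\operatorname{Tor}^{U(\mathfrak{g}_-)}_k(\mathbb{C},\mathbb{C})$, the Chevalley--Eilenberg complex $\bigwedge^\bullet\mathfrak{g}_-$ of the statement being the standard complex that computes it.

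First I would record the structure of generalized Verma modules. For a $\mathfrak{g}_0$-dominant integral weight $\mu$ let $V(\mu)$ be the corresponding finite-dimensional irreducible $\mathfrak{g}_0$-module, inflated to $\mathfrak{p}$ by letting $\mathfrak{g}_+$ act by $0$, and set $M_{\mathfrak{p}}(\mu):=U(\mathfrak{g})\otimes_{U(\mathfrak{p})}V(\mu)$. By the Poincar\'e--Birkhoff--Witt theorem $U(\mathfrak{g})\cong U(\mathfrak{g}_-)\otimes U(\mathfrak{p})$ as right $U(\mathfrak{p})$-modules, so that
$$M_{\mathfrak{p}}(\mu)\;\cong\;U(\mathfrak{g}_-)\otimes_{\mathbb{C}}V(\mu)$$
as $(\mathfrak{g}_-,\mathfrak{g}_0)$-modules. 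In particular each $M_{\mathfrak{p}}(\mu)$ is a \emph{free} $U(\mathfrak{g}_-)$-module, and its module of $\mathfrak{g}_-$-coinvariants is
$$\mathbb{C}\otimes_{U(\mathfrak{g}_-)}M_{\mathfrak{p}}(\mu)\;\cong\;V(\mu)$$
as $\mathfrak{g}_0$-modules.

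The key input is the parabolic BGG resolution of the trivial module $\mathbb{C}=V(0)$, due to Garland--Lepowsky in the Kac--Moody setting: an exact complex of $\mathfrak{g}$-modules
$$\cdots\longrightarrow C_k\xrightarrow{\ d_k\ }C_{k-1}\longrightarrow\cdots\longrightarrow C_0\longrightarrow\mathbb{C}\longrightarrow 0,\qquad C_k=\bigoplus_{w\in W_0,\,l(w)=k}M_{\mathfrak{p}}(w.\rho-\rho),$$
whose differentials are $\mathfrak{g}$-module homomorphisms. Assembling this resolution is the substantive step and the genuine obstacle: one must check that $w.\rho-\rho$ is $\mathfrak{g}_0$-dominant integral for every $w\in W_0$ (so the terms are defined), organize the standard maps between generalized Verma modules along the Bruhat order on $W_0$ into a complex, and prove exactness. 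I would obtain exactness exactly as Garland--Lepowsky do, comparing with the relative Verma resolution and invoking the Weyl--Kac character formula together with strong linkage to force all homology except that of $\mathbb{C}$ to cancel.

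With the resolution in hand the computation collapses formally. Since each $C_k$ is $U(\mathfrak{g}_-)$-free, the complex $C_\bullet$ is a flat resolution of $\mathbb{C}$, so $H_k(\mathfrak{g}_-)=\operatorname{Tor}^{U(\mathfrak{g}_-)}_k(\mathbb{C},\mathbb{C})$ is the $k$-th homology of $\mathbb{C}\otimes_{U(\mathfrak{g}_-)}C_\bullet$. By the coinvariant computation this tensored complex is
$$\cdots\longrightarrow\bigoplus_{w\in W_0,\,l(w)=k}V(w.\rho-\rho)\xrightarrow{\ \bar d_k\ }\bigoplus_{w\in W_0,\,l(w)=k-1}V(w.\rho-\rho)\longrightarrow\cdots,$$
and each induced map $\bar d_k$ is a $\mathfrak{g}_0$-module homomorphism. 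Its components $V(w.\rho-\rho)\to V(w'.\rho-\rho)$ run between \emph{irreducible} $\mathfrak{g}_0$-modules whose highest weights are distinct: indeed $l(w)=k\neq k-1=l(w')$ forces $w\neq w'$, and since $\rho$ is regular one has $w.\rho\neq w'.\rho$. Schur's lemma therefore annihilates every component, so every $\bar d_k=0$, the homology equals the chain groups, and
$$H_k(\mathfrak{g}_-)\;\cong\;\bigoplus_{w\in W_0,\,l(w)=k}V(w.\rho-\rho),$$
as claimed. Thus the entire difficulty is concentrated in the construction and exactness of $C_\bullet$; granting that deep ingredient, the passage to $\mathfrak{g}_-$-coinvariants and the Schur-lemma collapse are routine.
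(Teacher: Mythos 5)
First, note that the paper does not prove this statement at all: it is quoted from \cite{GL} and \cite{Liu}, so your proposal has to be measured against those sources rather than against an argument in the text. The formal half of your argument is correct, and it is indeed the standard way Kostant's formula follows once a parabolic BGG resolution is available: the PBW identification $M_{\mathfrak{p}}(\mu)\cong U(\mathfrak{g}_-)\otimes V(\mu)$, the computation of $\mathfrak{g}_-$-coinvariants, and the Schur-lemma collapse (using that $\rho$ is regular, so the weights $w.\rho-\rho$, $w\in W_0$, are pairwise distinct) are all sound.

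The genuine gap lies in where you propose to get the resolution, which you yourself identify as ``the substantive step and the genuine obstacle.'' In \cite{GL} the logical order is the reverse of yours: Garland and Lepowsky first prove the homology theorem, by adapting Kostant's Laplacian/Casimir argument to symmetrizable Kac--Moody algebras, and their generalized BGG resolution is then deduced from the homology theorem. So your opening premise --- that the Laplacian route ``does not adapt to the infinite-dimensional setting'' --- is exactly backwards (that adaptation is the content of \cite{GL}), and citing their resolution in order to prove their homology theorem is circular. Your fallback sketch of an independent exactness proof does not escape the circle: the Weyl--Kac character formula and the strong linkage principle are themselves established via the Casimir operator, hence require a symmetrizable generalized Cartan matrix. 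That restriction is also a second, independent defect: the theorem as stated --- and as used in Section 5 of the paper, for arbitrary admissible pairs, whose diagrams need not be symmetrizable --- covers all Kac--Moody algebras, which is precisely why \cite{Liu} is cited alongside \cite{GL}. To repair the argument you would need a construction of the resolution that does not pass through the homology theorem (for instance the direct category-$\mathcal{O}$ construction of Rocha-Caridi and Wallach), and for the non-symmetrizable case a genuinely different argument, which is the actual content of \cite{Liu}.
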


Note that the $\mathbb{Z}$-grading on the Kac-Moody Lie algebra $\mathfrak{g}$ induces a $\mathbb{Z}$-grading on the homology modules $H_k(\mathfrak{g_-})$. We will combine the Kostant's formula with the following result which is an immediate corollary of Theorem 3.1 from \cite{Kang}:

\begin{observation}\label{ob:g-2}
$$
\mathfrak{g}_{-2}\simeq\frac{\bigwedge^2\mathfrak{g}_{-1}}{H_2(\mathfrak{g_-})_{-2}}
$$
\end{observation}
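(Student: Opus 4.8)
The plan is to read the isomorphism directly off the Chevalley--Eilenberg complex computing $H_\bullet(\mathfrak{g}_-)$, after restricting to the internal degree $-2$ component induced by the $\mathbb{Z}$-grading. First I would observe that every homogeneous element of $\mathfrak{g}_-=\bigoplus_{i\geq 1}\mathfrak{g}_{-i}$ has internal degree at most $-1$, so that wedging imposes a sharp constraint in each fixed internal degree. Concretely, in internal degree $-2$ the complex is supported in only two homological positions: in position $1$ one has $\left(\bigwedge^1\mathfrak{g}_-\right)_{-2}=\mathfrak{g}_{-2}$, and in position $2$ the only way to write $-2$ as a sum of two integers each $\leq -1$ is $(-1)+(-1)$, whence $\left(\bigwedge^2\mathfrak{g}_-\right)_{-2}=\bigwedge^2\mathfrak{g}_{-1}$. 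All higher wedge powers, as well as $\bigwedge^0\mathfrak{g}_-=\mathbb{C}$, vanish in internal degree $-2$; in particular $\left(\bigwedge^3\mathfrak{g}_-\right)_{-2}=0$ because $-2$ cannot be a sum of three integers each $\leq -1$.

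Next I would identify the relevant differential. By the formula for $d_2$, on a decomposable element we have $d_2(g_1\wedge g_2)=-[g_1,g_2]$, so the map $d_2\colon\bigwedge^2\mathfrak{g}_{-1}\to\mathfrak{g}_{-2}$ appearing in internal degree $-2$ is, up to sign, exactly the Lie bracket. Since $\left(\bigwedge^3\mathfrak{g}_-\right)_{-2}=0$, there is no incoming differential, and therefore
$$H_2(\mathfrak{g}_-)_{-2}=\ker\left(d_2\right)=\ker\left([\cdot,\cdot]\colon\bigwedge^2\mathfrak{g}_{-1}\to\mathfrak{g}_{-2}\right).$$

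To conclude I would invoke surjectivity of this bracket. The proof of Lemma~\ref{l:surjective} applies verbatim to the negative grading, with the generators $f_i$ in place of $e_i$: the root space $\mathfrak{g}_{-\delta}$ is spanned by iterated brackets of the $f_i$, and the same Jacobi-identity induction shows that $[\cdot,\cdot]\colon\bigwedge^2\mathfrak{g}_{-1}\to\mathfrak{g}_{-2}$ is onto. Hence the first isomorphism theorem gives
$$\mathfrak{g}_{-2}\simeq\frac{\bigwedge^2\mathfrak{g}_{-1}}{\ker\left([\cdot,\cdot]\right)}=\frac{\bigwedge^2\mathfrak{g}_{-1}}{H_2(\mathfrak{g}_-)_{-2}},$$
which is the asserted isomorphism, and it is manifestly $\mathfrak{g}_0$-equivariant.

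The computation is essentially bookkeeping, so I do not expect a genuine obstacle; the only point demanding attention is the degree accounting, namely confirming that in internal degree $-2$ the complex truly collapses to the two-term sequence $\bigwedge^2\mathfrak{g}_{-1}\xrightarrow{d_2}\mathfrak{g}_{-2}$ (which hinges on $\mathfrak{g}_-$ being the strictly negative part $\bigoplus_{i\geq 1}\mathfrak{g}_{-i}$, so that no $\mathfrak{g}_0$-factors intrude into degree $-2$). Once this is fixed, the statement is indeed an immediate corollary of the general degree-$(-j)$ formula of Kang, specialized to $j=2$.
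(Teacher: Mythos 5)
Your argument is correct, and it takes a genuinely different route from the paper, which offers no computation at all here: the observation is simply declared to be an immediate corollary of Theorem 3.1 of \cite{Kang}, a general formula expressing every graded piece $\mathfrak{g}_{-j}$ in terms of the homology modules $H_i(\mathfrak{g}_-)$. You instead re-derive the $j=2$ case from scratch by degree bookkeeping in the Chevalley--Eilenberg complex: in internal degree $-2$ the complex collapses to the two-term sequence $0\to\bigwedge^2\mathfrak{g}_{-1}\xrightarrow{d_2}\mathfrak{g}_{-2}$ with $d_2=-[\cdot,\cdot]$, so $H_2(\mathfrak{g}_-)_{-2}$ is literally the kernel of the bracket (not merely a subquotient), and surjectivity of the bracket finishes the job by the first isomorphism theorem. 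This buys self-containedness and makes transparent why $H_2(\mathfrak{g}_-)_{-2}$ may legitimately be treated as a \emph{subspace} of $\bigwedge^2\mathfrak{g}_{-1}$ in the quotient --- a point the bare citation leaves implicit; the citation, conversely, buys the general-$j$ statement, which the paper never actually uses. Two details you handled correctly and that deserve emphasis: the argument does require $\mathfrak{g}_-$ to be the strictly negative part $\bigoplus_{i\geq 1}\mathfrak{g}_{-i}$ (the paper's displayed definition starts the sum at $i=0$, evidently a typo, since Kostant's formula as stated would otherwise fail already for $H_0$); and the surjectivity of $[\cdot,\cdot]\colon\bigwedge^2\mathfrak{g}_{-1}\to\mathfrak{g}_{-2}$ does follow from the proof of Lemma \ref{l:surjective} with the generators $f_i$ in place of $e_i$, since the eigenspaces $\mathfrak{g}_{-\delta}$ for $\delta\in Q_+$ are spanned by iterated brackets of the $f_i$ exactly as on the positive side (equivalently, apply the Chevalley involution).
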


Now we are ready to proof Theorem \ref{th:max}.

\begin{proof}
As mentioned at the beginning of this section, to prove that $\mathcal{W}(D,\alpha)=\mathcal{W}_{max}(\mathfrak{g}_1)$ it is enough to show that if $\lambda$ is the lowest weights of $K=\ker\left([\cdot,\cdot]\right)$, then it is of the form $2\alpha+\beta$ for some simple positive root $\beta$. Since $\mathfrak{g}_i\simeq(\mathfrak{g}_{-i})^*$ this is equivalent to showing that if a highest weights of $\bigwedge^2\mathfrak{g}_{-1}$ is \textit{not} of the from $-(2\alpha+\beta)$, it is also a highest weight of $\mathfrak{g}_{-2}$. Using the Observation \ref{ob:g-2} we see that if $\lambda$ is a highest weight of $\bigwedge^2\mathfrak{g}_{-1}$ which is not a highest weight of $H_2(\mathfrak{g_-})_{-2}$, then $V(\lambda)$ is a factor of $\mathfrak{g}_{-2}$. Thus it is enough to show that all the highest weights of $H_2(\mathfrak{g_-})_{-2}$ are of the from $-(2\alpha+\beta)$.

From the Kostant's formula the highest weights of $H_2(\mathfrak{g_-})$ are of the form $w.\rho-\rho$ for certain elements of the Weyl group $w\in W$ satisfying $l(w)=2$. Let $w=r_ir_j$, $i\neq j$. Note that we have $r_i.\alpha_j=\alpha_j+n\alpha_i$ for some integer $n\in\mathbb{Z}$. Thus we compute
$$
w.\rho=r_i.(r_j.\rho)=r_i.(\rho-\rho(\alpha^\vee_j)\alpha_j)=r_i.(\rho-\alpha_j)=r_i.\rho-r_i\alpha_j=
$$
$$=(\rho-\alpha_i)-(\alpha_j+n\alpha_i)=\rho-\big((n+1)\alpha_i+\alpha_j\big)
$$
Hence $w.\rho-\rho=-\big((n+1)\alpha_i+\alpha_j\big)$. To land in the graded piece $H_2(\mathfrak{g_-})_{-2}$ it is clearly necessary that $\alpha_i=\alpha$ and $n+1=2$. Thus $w.\rho-\rho=-(2\alpha+\alpha_j)$ which concludes the proof.

\end{proof}

\vspace{-5mm}

\end{document}